\newtheorem{proposition}{Proposition}
\newtheorem{corollary}{Corollary}
\newtheorem{lemma}{Lemma}
\newtheorem{definition}{Definition}
\newtheorem{remark}{Remark}
\begin{document}

\begin{frontmatter}
\title{An opaque selling scheme to reduce shortage and wastage in perishable inventory systems}
%\title{Application of an opaque scheme to perishable inventory systems: A means to reduce shortage and wastage}
%\tnotetext[mytitlenote]{Fully documented templates are available in the elsarticle package on \href{http://www.ctan.org/tex-archive/macros/latex/contrib/elsarticle}{CTAN}.}

%% Group authors per affiliation:
%\author{Elsevier\fnref{myfootnote}}
%\address{Radarweg 29, Amsterdam}
%\fntext[myfootnote]{Since 1880.}

%% or include affiliations in footnotes:
\author[mymainaddress]{Katsunobu Sasanuma\corref{mycorrespondingauthor}}
\ead{katsunobu.sasanuma@stonybrook.edu}
\cortext[mycorrespondingauthor]{Corresponding author}

\author[mysecondaryaddress]{Akira Hibiki}
\ead{hibiki@tohoku.ac.jp}

\author[mymainaddress]{Thomas Sexton}
\ead{thomas.sexton@stonybrook.edu}

\address[mymainaddress]{College of Business, Stony Brook University, Stony Brook, NY 11794, USA}
\address[mysecondaryaddress]{Graduate School of Economics and Management, Tohoku University, Sendai, Japan}
%\ARTICLEAUTHORS{
%\AUTHOR{Katsunobu Sasanuma,$^{\bold{a}}$ Mohammad Delasay,$^{\bold{a}}$ Christine Pitocco,$^{\bold{a}}$ Alan Scheller-Wolf,$^{\bold{b}}$\\ Thomas Sexton$^{\bold{a}}$}
%\AFF{$^{\bold{a}}$College of Business, Stony Brook University, State University of New York, Stony Brook NY 11794\\
%$^{\bold{b}}$Tepper School of Business, Carnegie Mellon University, Pittsburgh PA 15213}
%\dedicatory{Dedicated to the memory of N. N. (1930--2010)}
%\dedicatory{To Professor Manuel Valdivia on the occasion of his seventieth birthday.}

\begin{abstract}
Effective management of perishable inventory systems is often strewn with challenges, especially when a strong trade-off relationship exists between shortage and wastage of perishables: A smaller inventory increases the chance to lose sales (leading to higher expected shortage cost), while a larger inventory increases the chance to waste perishables (leading to higher expected wastage cost). The root cause of this strong trade-off relationship is high product demand variability. To mitigate the issue and reduce the cost of operating perishable inventory systems, some grocery stores utilize an opaque selling scheme: selling an anonymous product whose brand or exact specification is shielded from customers at the time of sale. The use of opaque products has now become a popular means to reduce shortage/wastage at grocery stores. \textcolor{black}{However, there has been little discussion of the effectiveness of opaque schemes applied to perishable inventory systems.}

\textcolor{black}{In this paper, we propose an opaque scheme based on the balancing policy on demand, which tries to average out orders for products. We confirm both analytically and numerically that this opaque scheme effectively reduces product demand variability, thereby reducing both shortage and wastage for perishable inventory systems under a base-stock policy. We also present an analytical formula that reveals insights into the opaque scheme: The ratio between the opaque proportion and the coefficient of variation of product demands plays a key role to determine the effectiveness of our opaque scheme. Furthermore, we provide a rule of thumb to find the threshold opaque scheme parameters needed to achieve the maximum total cost savings for perishable inventory systems. We hope that many retailers selling perishables (e.g., fresh produce and baked goods) find the opaque scheme useful, implement it, and contribute to the reduction of the food wastage.}
%Reducing shortages and wastages for perishable inventory systems (e.g., grocery stores) is often difficult due to high variability of customer demand. In this paper we propose a scheme that utilizes an opaque product: an anonymous product whose specification is revealed to customers only when they receive it. Under this opaque scheme, an opaque product is sold together with regular, non-opaque products. The demand (number of orders) for opaque product can be utilized to control the variability of demands for non-opaque products, making the operation of inventory systems efficient. We discuss the effectiveness of this opaque scheme for perishable inventory systems that operate under a base-stock policy.
\end{abstract}

\begin{keyword}
opaque selling scheme, perishable inventory system, base-stock policy, wastage, shortage
%\MSC[2010] 00-01\sep  99-00
\end{keyword}

\end{frontmatter}

%\linenumbers

\section{Introduction}
In 2011, about one-third of food produced for human consumption was wasted globally \cite{gustavsson2011global}. In the United States in 2010, about 133 billion pounds (31\%) of the food supply was wasted, including 43 billion pounds at retail level \cite{buzby2014estimated}. This large amount of food waste not only imposes an economic loss for producers and sellers, but also exacerbates food insecurity for low-income communities \cite{munesue2015effects}. One of the contributors to food waste is poor management of perishable inventory: \cite{buzby2014estimated} point out that food waste at retail level occurs from ``overstocking or overpreparing due to difficulty predicting number of customers.'' \textcolor{black}{As is commonly known, ``businesses are frequently motivated to overproduce or over-order because they are afraid of running out of food or failing to meet client expectations'' \cite{brodersen2019oregon}.} For retail store managers, overstocking perishable products is necessary in anticipation of the possibility of lost sales when product demand is highly variable. However, overstocking leads to higher expected wastage. The root cause of this trade-off relationship is high variability of product demand. If the product demand was accurately predictable with no uncertainty, a seller would order the exact amount necessary and yield no lost sales (i.e., shortage) or wastage; however, an accurate prediction of product demand is not always possible.

To cope with highly variable product demand in perishable inventory systems, retail store owners often utilize an opaque selling scheme (or simply, an \emph{opaque scheme}). This scheme involves the sales of an opaque product, wherein customers only know the types of products they may possibly get, but do not know the exact product attributes until they purchase or receive it. The simplest example would be the following. Suppose a seller receives an order of 30 red apples, 50 yellow apples, and 20 \emph{opaque} apples; the seller allocates 20 opaque orders to red apples, making demands of both red and yellow apples equal to 50, so that the seller can average out red and yellow apples' demands and achieve less variability for both product demands. Many merchants and producers implement a variety of opaque schemes successfully without calling them \emph{opaque}: for example, a package of assorted food items (such as bagels and donuts), whose combination is not easily identified in a brown bag, is often sold at a discount price at supermarkets; Apple Japan sells Lucky Bags that contain various currently-sold merchandises in sealed bags at a discount price; Priceline sells anonymous hotels as ``Express Deal'' together with hotels that are fully specified. Another example is the app, ``Too Good To Go'', which aims to reduce food waste by selling unspecified food items (e.g., bakeries, delis, and cakes). \textcolor{black}{The number of mobile apps that implement various opaque schemes has rapidly grown in recent years. In addition, the COVID-19 pandemic has driven customers to shift from physical to online stores across many categories such as fresh produce and groceries, making the implementation of an opaque scheme easier than before.}

\subsection{\textcolor{black}{Contributions and outline}}
\textcolor{black}{The benefit of opaque schemes for non-perishables (e.g., clothes) has been observed in practice and discussed in the literature. However, even though we observe opaque schemes being utilized at many grocery stores and bakeries selling fresh foods through mobile apps, a quantitative analysis of opaque schemes for perishables has not been published. This paper introduces an opaque scheme suitable for perishables, examines its effectiveness for perishable inventory systems, and provides simple rules of thumb for practitioners when implementing our opaque scheme.}

In this paper, we first review the previous literature and explain the research gap in \S\ref{sec2}. We provide approximate formulas to obtain the variability (Corollary~\ref{relativevariance}) and correlation (Lemma \ref{involution}) of product demands under our opaque scheme in \S\ref{sec3}; we provide an approximate formula to obtain the threshold variability to make the expected shortage and wastage close to zero (Proposition \ref{wastage}) in \S\ref{sec4}; we confirm the accuracy of the analytical formulas using numerical experiments in \S\ref{sec:numerical}; and we discuss the managerial insights into the opaque scheme in \S\ref{sec6}. Finally, \S\ref{sec7} summarizes our study. The list of notations used in this paper and all proofs are in the appendix.

\section{Related Literature}
\label{sec2}
To reduce the amount of wastage, many retail store owners often sell their about-to-expire products at significantly discounted prices. This practice may not be desirable; knowledge that there may be a last-minute discount could dissuade buyers from purchasing items at regular (higher) prices. Reinforcing such a buyer habit may decrease the average selling price and seriously damage the market \cite{sviokla2003value}. Another common practice is to donate older products to food banks, which certainly contributes to the reduction of wastage; however, retail stores still pay for wastage. The root cause, high variability of product demand, still remains. 

\subsection{\textcolor{black}{Key idea: risk pooling}}
\textcolor{black}{To cope with risk (volatile and unpredictable customer demand), operations managers frequently use a scheme called \emph{risk pooling}---a strategy to suppress the variability using pooled demand (combined demand). Risk pooling has been proven effective in the context of supply chain management (see, e.g., \cite{davis1993effective, graves2003process, simchi2012understanding}). An opaque scheme is one such example: It utilizes pooled demand as the means to control highly variable and unpredictable demands of products. This scheme has attracted much attention especially in revenue management (\cite{jerath2010revenue, anderson2012choice}). For example, Priceline has reduced shortage (lost sales) and wastage of hotel rooms by offering opaque hotel rooms to their customers. Under Priceline's opaque scheme, room buyers do not know the brand of the hotel prior to purchase; however, they may reserve an opaque hotel room (at a discount room rate) that would stay unoccupied and be wasted if Priceline did not offer opaque hotel rooms. \cite{anderson2009setting} reveals the relationship between opaque hotel room pricing and hotel room inventory, thereby confirming the effectiveness of Priceline's opaque scheme.}

\subsection{\textcolor{black}{Research gap}}
\textcolor{black}{Although an increasing attention is paid to opaque schemes in revenue management, the literature on opaque schemes in inventory systems is still very small. \cite{elmachtoub2021power, elmachtoub2015retailing, elmachtoub2019value} study an effectiveness of opaque schemes in non-perishable inventory systems. \cite{elmachtoub2021power} focus on the pricing schemes of opaque products when pessimistic and risk-neutral customers exist. \cite{elmachtoub2015retailing} and \cite{elmachtoub2019value} discuss the opaque scheme for non-perishable inventory systems following a continuous-review $(0, S)$ policy, where an order is placed to make the inventory level back to the order-up-to level $S$ when the inventory is depleted. An opaque scheme is applied to the system with two non-opaque products \cite{elmachtoub2015retailing} or more \cite{elmachtoub2019value}; their analysis is limited to the no lead time, backlogging, or lost sales case.}

\textcolor{black}{These existing studies focus on non-perishable products (e.g., clothes), not on perishable products (e.g., fresh produce, baked goods). As far as authors know, theoretical analysis of opaque schemes has not been made for perishable inventory systems, despite the fact that many retail stores already implement such schemes. Our study fills the research gap by looking into the effectiveness of opaque schemes in perishable inventory systems. We apply the no lead time assumption for our study in accord with previous opaque scheme studies. Our paper has two major differences from existing studies on non-perishable inventory systems: (1) we consider a periodic base-stock policy, which is more suitable for perishable products; and (2) our opaque order fulfillment procedure is based on the number of orders received in each period (BPD) and not based on the on-hand inventory (BPI).}

\subsection{\textcolor{black}{BPI (balancing policy on inventory)}}
To carry out the risk pooling principle in a perishable inventory system, we can consider \emph{balancing policy on inventory} (BPI): Using pooled demand (opaque product demand), BPI attempts to average out the inventory level among products. The actual procedure to implement BPI would be to ``fulfill demand for opaque products by using the product with the highest on-hand inventory level'' \cite{elmachtoub2019value}. For non-perishable inventory systems, BPI is often considered as an optimal policy \cite{mcgavin1997balancing}; furthermore, even a small proportion of opaque product demand can effectively improve the operation \cite{elmachtoub2019value}. However, sellers may face challenges when implementing BPI in perishable inventory systems:
First, larger on-hand inventory often implies larger amount of aged (about-to-perish) inventory. Thus, BPI may implicitly prioritize the use of older units to fulfill opaque demand whenever possible. When sellers implement BPI, customers might regard opaque products as not only \emph{opaque}, but also \emph{aged}. (For example, Lucky Bags sold at Apple Japan, a popular opaque selling scheme, seldom contain recently-released products. Instead, we often observe a bag full of obsolete products since sellers implement BPI.) Such a perception harms the reputation of stores and provides a disincentive for customers to purchase opaque products unless they are sold at hugely discounted prices. Second, the proper execution of BPI for perishable inventory is extremely complicated; for example, it is hard to determine which inventory to use first, smaller inventory of older products v.s. larger inventory of newer products. These challenges do not exist for non-perishables, but they may become a major obstacle when implementing BPI for perishables.

\subsection{\textcolor{black}{BPD (balancing policy on demand)}}
\textcolor{black}{We propose a simpler alternative, \emph{balancing policy on demand} (BPD), which attempts to average out product demand.} BPD fulfills demand for opaque products by using the product with the least demand. BPD does not utilize on-hand inventory information, and thus may have inferior performance compared to BPI; however, we show that BPD only requires a small amount of opaque product demand to average out demands among products, thereby achieving a balanced inventory level that BPI also achieves. Furthermore, BPD does not suffer from two challenges that BPI faces: First, BPD is simpler to implement in practice than BPI; moreover, under BPD, a seller does not necessarily allocate an older inventory to fulfill the demand of opaque products. BPD requires a purchaser of opaque products to give up only the \emph{specificity} of products (in exchange for some monetary or nonmonetary benefits for customers), not the \emph{quality} (i.e., freshness, age) of products. \textcolor{black}{We show in this paper that our opaque scheme using BPD effectively reduces the variability of demands and achieves high cost savings for perishable inventory systems.}

\subsection{\textcolor{black}{Scaled Poisson demand for non-opaque products}}
\textcolor{black}{BPD opaque schemes (as well as BPI) can be applicable to any product demand distributions (regardless of perishables or non-perishables), but for the sake of analytical studies, we assume that a periodic demand of each product follows a scaled Poisson distribution.} An alternative, more commonly used demand distribution in inventory models would be a compound Poisson distribution\footnote{A compound Poisson distribution is versatile and can replicate various consumer purchasing behaviors: Each customer arrives following a Poisson process, whose purchases follow an arbitrary distribution.} (see, e.g., \cite{dominey2004performance, babai2011analysis} and references therein); however, we utilize a scaled Poisson distribution since it can accurately approximate various compound Poisson distributions \cite{bohm2014statistics} and yet, is simpler to implement than a compound Poisson distribution. Throughout this study, we allow demands and orders for inventory replenishment to be non-integers (note: examples of non-integer perishables are cut vegetables/fruits, which are gaining in popularity recently).

\subsection{\textcolor{black}{Base-stock model for perishable inventory systems}}
\textcolor{black}{The optimal inventory policy for a perishable inventory system is hard to find due to \emph{curse of dimensionality}---the intractability of all possible change of inventory items with different ages over infinite period. Thus, various heuristic methods have been proposed (for a review, please see \cite{karaesmen2011managing, baron2011managing}; one of the examples is \cite{sasanuma2020marginal}). Among many heuristic policies that are available today, a simple base-stock policy is still considered as an effective scheme \cite{huh2009asymptotic, bijvank2014robustness, bu2019asymptotic}. In this study, we use this simple base-stock policy. We further impose simplifying assumptions used in \cite{nahmias1973optimal}: fixed shelflife, i.i.d. (independent and identically distributed) demand, periodic review, FIFO (first-in first-out) issuance policy, and the total cost that involves shortage and wastage costs. The analysis of the models with more relaxed assumptions would be the next step after this study.}

\section{Opaque Selling Scheme}
\label{sec3}
\textcolor{black}{This section introduces our $(n,p)$ opaque scheme based on BPD policy. The result we obtain in this section applies to both perishable and non-perishable products. The goal of the $(n,p)$ opaque scheme is to reduce the variability of non-opaque product demands by allocating opaque product demand to non-opaque product demands (i.e., fulfilling opaque product orders using non-opaque products.) Notations used in this study are summarized in Appendix~\ref{notation}}

\subsection{\textcolor{black}{Model}}
The $(n,p)$ opaque scheme is characterized by~${n\; (\ge2)}$ non-opaque products and probability vector~${\boldsymbol{p}:=(p^1,p^2,\cdots,p^n)}$, where each~${p^i \in [0,1]}$ represents the probability that a customer for non-opaque product~${i \in \{1,\cdots,n\}}$ switches to purchase an opaque product if both opaque and non-opaque products are offered to customers. We introduce three random variables to describe the demands at various stages: original demands ($D_0^i$ for non-opaque product $i$), intermediate demands ($X_{\boldsymbol{p}}^i$ for non-opaque product $i$ and $X_{\boldsymbol{p}}^0$ for an opaque product), and adjusted demands ($D_{\boldsymbol{p}}^i$ for non-opaque product $i$). \emph{Original demands} are the number of orders placed for non-opaque products when no opaque product is offered to customers. We represent original demands by independent (but not necessarily identical) random variable $D_0^i$ for product $i$, whose mean is $\mu^i=\mathbb{E}[D^i_0]$ and variance is $\sigma_i^2=\mathbb{E}[(D^i_0-\mu^i)^2]$.  \emph{Intermediate demands} are the number of orders placed for non-opaque and opaque products when both non-opaque/opaque products are offered to customers. We represent intermediate demands by $X_{\boldsymbol{p}}^i$ for non-opaque product $i$ and $X_{\boldsymbol{p}}^0$ for an opaque product. Finally, \emph{adjusted demands} are the number of orders for non-opaque products including some of the orders originally placed for an opaque product. (This adjustment is essentially the procedure to distribute opaque demand $X_{\boldsymbol{p}}^0$ to $n$ non-opaque demands $X_{\boldsymbol{p}}^i, \forall i$.)

\textcolor{black}{Using these variables, we can describe the procedure to implement our opaque scheme under BPD:\\
\textbf{Step 1}: (status quo) All products sold are non-opaque, and their demands are $D_0^i$ (original demand).\\
\textbf{Step 2}: (implementation) An opaque product is now sold. All non-opaque product demands change from $D_0^i$ to $X_{\boldsymbol{p}}^i$ (intermediate demand). In addition, an opaque product shows its demand $X_{\boldsymbol{p}}^0$.\\
\textbf{Step 3}: (adjustment) The entire opaque demand $X_{\boldsymbol{p}}^0$ is distributed and added to $X_{\boldsymbol{p}}^i, \forall i$. Through this adjustment, we obtain $D_{\boldsymbol{p}}^i$ (adjusted demand).}

\subsection{\textcolor{black}{Assumptions}}
In order to facilitate the analysis, we make a few simplifying assumptions for the model. First, we require the average of adjusted demand (orders) remain unchanged: $\mathbb{E}[D_{0}^i]=\mathbb{E}[D_{\boldsymbol{p}}^i]\; (=\mu^i), \forall \boldsymbol{p}, \forall i$. Second, we assume that the total demand (total number of orders) remains unchanged after an opaque product is offered to customers:
$\sum_i D^i_0 = \sum_i X_{\boldsymbol{p}}^i + X^0_{\boldsymbol{p}}=\sum_i D_{\boldsymbol{p}}^i, \forall \boldsymbol{p}.$
%Furthermore, since the process of splitting customers is considered as a thinned Poisson process, demands after thinning still maintain scaled Poisson distributions with means ${(1-p^i)\mu^i}$ for product $i$ and ${p^i\mu^i}$ for an opaque product.
%By implementing the $(n,\boldsymbol{p})$ opaque scheme, $n$ original demands $D_0^i, \forall i$ are split into a single opaque product demand $X^0_{\boldsymbol{p}}$ and $n$ non-opaque product demands $X_{\boldsymbol{p}}^i, \forall i$. We then distribute $X^0_{\boldsymbol{p}}$ to each $X_{\boldsymbol{p}}^i$ to recreate an \emph{adjusted} demand $D_{\boldsymbol{p}}^i$ for product $i$. 
Lastly, we assume that the customers' decisions are independent from each other: ${\mathbb{E}[X_{\boldsymbol{p}}^i]=(1-p^i)\mu^i}$ and ${\mathbb{E}[X_{\boldsymbol{p}}^0]=\sum_i p^i \mu^i}$. Thus, we can interpret $p^i$ as a proportion of orders shifted from non-opaque product $i$ to an opaque product under the opaque scheme: ${p^i=1-\mathbb{E}[X_{\boldsymbol{p}}^i]/\mu^i}$. A seller can control the proportion $\boldsymbol{p}$ by providing customers incentives to purchase an opaque product. Examples of such incentives include discounts, reduced/free shipping fees, and labeling an opaque product as an eco-friendly choice that helps reduce food waste. If no customers are inclined to purchase an opaque product, then $\boldsymbol{p}=\boldsymbol{0}=(0,0, \cdots,0)$ (no opaque case), and thus $X^0_{\boldsymbol{0}}=0$ and $D^i_0=X_{\boldsymbol{0}}^i=D_{\boldsymbol{0}}^i, \forall i$. In contrast, if all customers always purchase an opaque product, then
$\boldsymbol{p}=\boldsymbol{1}=(1,1, \cdots,1)$ (full opaque case), and thus,  $X^i_{\boldsymbol{1}}=0, \forall i$ and $\sum_i D^i_0=X^0_{\boldsymbol{1}}=\sum_i D^i_{\boldsymbol{1}}$. For simplicity of notation, if all elements of vector $\boldsymbol{p}$ are equivalent, we denote a subscript as scalar $p$ unless it creates a confusion (e.g., we denote $D^i_0$ instead of $D_{\boldsymbol{0}}^i$).

\subsection{\textcolor{black}{Variance $\sigma_{n,\boldsymbol{p}}^2$}}
Let the average variance of original demands be ${\sigma^2}:=\frac{1}{n}\sum_i var(D_0^i)$ and the average variance of adjusted demands under the $(n,p)$ opaque scheme be ${\sigma_{n,{\boldsymbol{p}}}^2}:=\frac{1}{n}\sum_i var(D_{\boldsymbol{p}}^i)$. The following lemma shows the bound of $\sigma_{n,\boldsymbol{p}}^2$.

\begin{lemma}
\label{bound}
The average variance $\sigma_{n,\boldsymbol{p}}^2$ is bounded as follows. $\sigma_{n,\boldsymbol{p}}^2$ takes an upper bound $\sigma_{n,{\boldsymbol{0}}}^2=\sigma^2$ when $\boldsymbol{p}=\boldsymbol{0}$ and takes a lower bound $\sigma_{n,{\boldsymbol{1}}}^2=\frac{\sigma^2}{n}$ when $\boldsymbol{p}=\boldsymbol{1}$.
\small
\begin{equation}
\label{pa1}
\sigma^2 \geq \sigma_{n,{\boldsymbol{p}}}^2 \geq \frac{\sigma^2}{n}.
\end{equation}
\normalsize
\end{lemma}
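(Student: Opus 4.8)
The plan is to settle the two extreme values of $\boldsymbol{p}$ first and then prove the two inequalities separately, because the lower bound needs only conservation of total demand (no feature of BPD enters) while the upper bound is exactly where the balancing/water-filling mechanism and the mean-preservation assumption do the work. For the endpoints: when $\boldsymbol{p}=\boldsymbol{0}$ there is no opaque product, so $X^0_{\boldsymbol{0}}=0$, $D^i_{\boldsymbol{0}}=D^i_0$, and $\sigma^2_{n,\boldsymbol{0}}=\frac1n\sum_i var(D_0^i)=\sigma^2$ by definition. When $\boldsymbol{p}=\boldsymbol{1}$ every order is opaque, so $X^i_{\boldsymbol{1}}=0$ for all $i$ and the opaque demand equals $S:=\sum_j D_0^j$; BPD then water-fills $S$ onto the zero vector, and after the recentering imposed by $\mathbb{E}[D^i_{\boldsymbol{1}}]=\mu^i$ one gets $D^i_{\boldsymbol{1}}=\frac1n S+(\mu^i-\bar\mu)$ with $\bar\mu=\frac1n\sum_j\mu^j$, so that independence of the $D_0^j$ yields $var(D^i_{\boldsymbol{1}})=var(S)/n^2=\sigma^2/n$ and hence $\sigma^2_{n,\boldsymbol{1}}=\sigma^2/n$.

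For the lower bound I would use that the total-demand assumption forces $\sum_i D^i_{\boldsymbol{p}}=S$ for every $\boldsymbol{p}$, so $var(S)=\sum_i var(D^i_{\boldsymbol{p}})+\sum_{i\neq j}cov(D^i_{\boldsymbol{p}},D^j_{\boldsymbol{p}})$. Bounding each covariance by Cauchy--Schwarz and then AM--GM, $cov(D^i_{\boldsymbol{p}},D^j_{\boldsymbol{p}})\le\sqrt{var(D^i_{\boldsymbol{p}})\,var(D^j_{\boldsymbol{p}})}\le\frac12(var(D^i_{\boldsymbol{p}})+var(D^j_{\boldsymbol{p}}))$, and summing over the $n(n-1)$ ordered pairs gives $var(S)\le n\sum_i var(D^i_{\boldsymbol{p}})=n^2\sigma^2_{n,\boldsymbol{p}}$. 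Since $var(S)=\sum_i\sigma_i^2=n\sigma^2$, this is precisely $\sigma^2_{n,\boldsymbol{p}}\ge\sigma^2/n$; equality forces the $D^i_{\boldsymbol{p}}$ to be perfectly positively correlated with equal variances, which is exactly what occurs at $\boldsymbol{p}=\boldsymbol{1}$.

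For the upper bound I would exploit the definition of BPD directly: conditional on the intermediate demands, the opaque mass $X^0_{\boldsymbol{p}}=\sum_i(D_0^i-X^i_{\boldsymbol{p}})$ is poured onto the currently smallest non-opaque demands, so the (pre-recentering) adjusted vector is the water-filling $\max(X^i_{\boldsymbol{p}},\theta_{\boldsymbol{p}})$ with the level $\theta_{\boldsymbol{p}}$ determined by $\sum_i\max(X^i_{\boldsymbol{p}},\theta_{\boldsymbol{p}})=S$. A short convexity argument (equivalently, the first-order/KKT conditions) identifies this vector as the minimizer of $\sum_i(d^i)^2$ over $\{d:\ d^i\ge X^i_{\boldsymbol{p}},\ \sum_i d^i=S\}$. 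The original vector $(D_0^i)_i$ is feasible for this program, since $D_0^i\ge X^i_{\boldsymbol{p}}$ pathwise (customers only switch away from non-opaque product $i$) and $\sum_i D_0^i=S$; hence pathwise $\sum_i(\max(X^i_{\boldsymbol{p}},\theta_{\boldsymbol{p}}))^2\le\sum_i(D_0^i)^2$. Taking expectations and converting second moments to variances via the mean-preservation assumption $\mathbb{E}[D^i_{\boldsymbol{p}}]=\mathbb{E}[D_0^i]=\mu^i$ then yields $n\sigma^2_{n,\boldsymbol{p}}=\sum_i var(D^i_{\boldsymbol{p}})\le\sum_i var(D_0^i)=n\sigma^2$, i.e. $\sigma^2_{n,\boldsymbol{p}}\le\sigma^2$, with equality at $\boldsymbol{p}=\boldsymbol{0}$.

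The main obstacle I anticipate lies in this last step when the $\mu^i$ are not all equal. Water-filling alone need not preserve $\mathbb{E}[\cdot]=\mu^i$, so one must shift each coordinate by a constant; this leaves variances unchanged but moves the means, and the clean inequality on $\sum_i\mathbb{E}[(\max(X^i_{\boldsymbol{p}},\theta_{\boldsymbol{p}}))^2]$ only delivers $\sigma^2_{n,\boldsymbol{p}}\le\sigma^2$ once one also knows $\sum_i(\mathbb{E}[\max(X^i_{\boldsymbol{p}},\theta_{\boldsymbol{p}})])^2\ge\sum_i(\mu^i)^2$. This is immediate when the product means coincide, but in general it needs a separate majorization argument (or a convention fixing precisely how BPD balances demands with unequal means); I would expect the published proof either to work in that symmetric situation or to supply exactly this extra ingredient.
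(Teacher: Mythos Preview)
Your argument is correct, but it differs from the paper's in both halves.

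For the lower bound, the paper does not go through pairwise covariances. It writes the pathwise identity
\[
\frac{1}{n}\sum_i (D_{\boldsymbol{p}}^i-\mu^i)^2
=\Bigl(\frac{1}{n}\sum_i (D_{\boldsymbol{p}}^i-\mu^i)\Bigr)^{2}
+\frac{1}{n^2}\mathop{\sum\sum}_{i<j}\bigl((D_{\boldsymbol{p}}^i-\mu^i)-(D_{\boldsymbol{p}}^j-\mu^j)\bigr)^2,
\]
drops the nonnegative second term, takes expectations, and then uses $\sum_i D_{\boldsymbol{p}}^i=\sum_i D_0^i$ together with independence of the $D_0^i$ to identify the first term with $var\bigl(\frac{1}{n}\sum_i D_0^i\bigr)=\sigma^2/n$. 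Your Cauchy--Schwarz/AM--GM route is the same inequality in covariance language (both say $var(\sum_i D_{\boldsymbol{p}}^i)\le n\sum_i var(D_{\boldsymbol{p}}^i)$), so neither approach buys anything the other does not; the paper's version is just a bit shorter.

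For the upper bound, the paper's entire proof is the single line ``the opaque scheme is only used to reduce $\sigma_{n,\boldsymbol{p}}^2$; equality holds when $\boldsymbol{p}=\boldsymbol{0}$.'' Your water-filling/KKT argument supplies exactly the rigor that sentence hides: the allocation minimizes a convex objective over a feasible set that contains $(D_0^i)_i$, hence does at least as well. The obstacle you anticipate for unequal $\mu^i$ is, however, self-inflicted. The paper's BPD (Corollary~\ref{policy}) water-fills on the \emph{centered} values $X_{\boldsymbol{p}}^i-\mu^i$, not on $X_{\boldsymbol{p}}^i$; the relevant convex program therefore minimizes $\sum_i (d^i-\mu^i)^2$ subject to $d^i\ge X_{\boldsymbol{p}}^i$ and $\sum_i d^i=S$. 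Since $(D_0^i)_i$ is still feasible, one gets the pathwise inequality $\sum_i (D_{\boldsymbol{p}}^i-\mu^i)^2\le \sum_i (D_0^i-\mu^i)^2$, and taking expectations together with $\mathbb{E}[(D_{\boldsymbol{p}}^i-\mu^i)^2]\ge var(D_{\boldsymbol{p}}^i)$ gives $\sigma_{n,\boldsymbol{p}}^2\le\sigma^2$ directly. No recentering step or separate majorization argument is needed.
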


Lemma~\ref{bound} implies that $\sigma_{n,{\boldsymbol{p}}}^2$ is minimized at $p=1$ when the deviation of each product demand from its mean is averaged out among all products: ${D_1^i=\mu^i+\frac{1}{n} \sum_j(D_0^j-\mu^j)}$. We can establish the opaque product demand allocation policy (BPD) as Corollary~\ref{policy}. We omit the proof, which follows straightforwardly from the fact that $\sigma_{n,\boldsymbol{p}}^2$ is minimized when we minimize the term $\mathop{\sum \sum}_{i<j} \left((D^i_p-\mu^i)-(D^j_p-\mu^j)\right)^2$ (see the proof of Lemma~\ref{pa1}).

\begin{corollary}
\label{policy}
Balancing policy on demand (BPD): To minimize the average variance $\sigma_{n,\boldsymbol{p}}^2$ under the $(n,p)$ opaque scheme, we distribute opaque product demand ($X^0_{\boldsymbol{p}}$) to non-opaque product demands ($X^i_{\boldsymbol{p}}, \forall i \in \{1,\cdots,n\})$ with smallest $X_{\boldsymbol{p}}^i-\mu^i$ or distribute equally if there is a tie.
\end{corollary}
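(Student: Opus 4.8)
\emph{Proof proposal.} The plan is to reduce the statement to a deterministic, realization-by-realization optimization and then solve that by a water-filling argument. Recall from the proof of Lemma~\ref{bound} the elementary identity
\[
n\sum_i y_i^2=\Big(\sum_i y_i\Big)^2+\mathop{\sum\sum}_{i<j}(y_i-y_j)^2,
\]
applied with $y_i:=D_{\boldsymbol p}^i-\mu^i$. By the total-demand assumption $\sum_i D_{\boldsymbol p}^i=\sum_i D_0^i$ together with the mean assumption $\mathbb{E}[D_{\boldsymbol p}^i]=\mu^i$, the quantity $\sum_i y_i=\sum_i(D_0^i-\mu^i)$ does not depend on how the opaque demand is allocated. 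Hence $n\,\sigma_{n,\boldsymbol p}^2=\sum_i var(D_{\boldsymbol p}^i)=\mathbb{E}\big[\sum_i y_i^2\big]$ is minimized as soon as we minimize, for \emph{every} realization, the sum of squares $\sum_i(X_{\boldsymbol p}^i+a^i-\mu^i)^2$ over allocations $a^i\ge0$ with $\sum_i a^i=X_{\boldsymbol p}^0$ (the adjusted demand being $D_{\boldsymbol p}^i=X_{\boldsymbol p}^i+a^i$); equivalently, by the identity above, minimizing $\mathop{\sum\sum}_{i<j}\big((D_{\boldsymbol p}^i-\mu^i)-(D_{\boldsymbol p}^j-\mu^j)\big)^2$, exactly the term highlighted after Lemma~\ref{bound}.

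Second, I would solve this pathwise convex problem. Fix a realization and write $z^i:=X_{\boldsymbol p}^i-\mu^i$. Minimizing $\sum_i(z^i+a^i)^2$ over the simplex-type set $\{a\ge0,\ \sum_i a^i=X_{\boldsymbol p}^0\}$ is a strictly convex program, so it has a unique minimizer, which I would characterize by an interchange argument: if a feasible $a$ had $z^i+a^i>z^j+a^j$ for some pair with $a^i>0$, then transferring an amount $\varepsilon\in(0,a^i]$ from coordinate $i$ to coordinate $j$ changes the objective by $-2\varepsilon\big((z^i+a^i)-(z^j+a^j)\big)+2\varepsilon^2<0$ for small $\varepsilon$, a contradiction. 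Therefore at the optimum all coordinates receiving positive allocation share a common value $c$ of $z^i+a^i$, while every coordinate with $a^i=0$ satisfies $z^i\ge c$; equivalently, $c$ is the unique level with $\sum_i(c-z^i)^+=X_{\boldsymbol p}^0$ and $a^i=(c-z^i)^+$. This is precisely the rule in Corollary~\ref{policy}: pour the opaque demand $X_{\boldsymbol p}^0$ into the non-opaque products with the smallest $X_{\boldsymbol p}^i-\mu^i$, raising them to a common level, and split equally among tied products.

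Finally I would lift back to the stochastic statement: since the allocation is performed period by period on the observed intermediate demands, choosing it to be the pathwise minimizer is an admissible policy, and monotonicity of the expectation then gives that BPD attains the minimum of $\sum_i var(D_{\boldsymbol p}^i)$, hence of $\sigma_{n,\boldsymbol p}^2$, among all allocation rules consistent with the model assumptions; one also checks that the water-filling rule preserves $\mathbb{E}[D_{\boldsymbol p}^i]=\mu^i$ in the cases of interest. The main obstacle is the pathwise convex optimization in the second step: the nonnegativity constraints $a^i\ge0$ are what prevent a naive ``equalize all $z^i$'' answer and force the active-set / water-filling description, so the careful point is the interchange (or KKT) argument identifying which products actually receive opaque demand; the reduction in the first step and the lifting in the last step are routine once the identity from Lemma~\ref{bound} is in hand.
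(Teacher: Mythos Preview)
Your argument is correct and follows the same route the paper indicates: the paper omits the proof but points to exactly the identity from Lemma~\ref{bound} to reduce the problem to minimizing $\sum\sum_{i<j}\big((D_{\boldsymbol p}^i-\mu^i)-(D_{\boldsymbol p}^j-\mu^j)\big)^2$, which is your Step~1, and you then supply the water-filling/interchange details that the paper leaves as ``straightforward.'' Your proposal is thus a fleshed-out version of the paper's own sketch.
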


For the rest of this paper, we impose additional conditions to obtain analytical expressions for some performance indices. Specifically, we assume constant~$p$ for all products and an i.i.d. scaled Poisson random variable $D_0$ to represent original non-opaque product demands. To obtain the distribution of $D_0$, we first start from a Poisson random variable with parameter $\lambda$ (which we call \emph{base Poisson parameter} of a scaled Poisson random variable) and scale its distribution to make its mean equal to~$\mu$. Specifically, denoting $Y_\gamma \sim Pois(\lambda)$, we can represent $D_0$ as $D_0=\frac{\mu}{\lambda}Y_{\lambda}$. It follows that the fully adjusted demand $D_1$ under the $(n,1)$ opaque scheme (i.e., $D_p$ with~$p=1$; a sample mean of~$n$ i.i.d.~$D_0$) also follows a scaled Poisson distribution: $D_1=\frac{\mu}{n\lambda}Y_{n\lambda}$.\footnote{Note that $D_p$ with $p\in (0,1)$ and $n \neq 1$ is not scaled Poisson; it is a complex mixture of two scaled Poisson distributions.} For use in Proposition~\ref{wastage}, we also introduce a sample mean of i.i.d. $D_1$ over $m$ shelflife periods: $\overline{D}_1=\frac{\mu}{nm\lambda}Y_{nm\lambda}$. For ease of reference, we summarize the properties of these scaled Poisson demands in Table~\ref{tab:originaldemand}.
\begin{table}[h]
\begin{center}
\caption{Properties of scaled Poisson demands $D_0$, $D_1$, and $\overline{D}_1$.}
\footnotesize
\begin{tabular}{c|cccc}
\noalign{\smallskip} \hline
 demand  & base Poisson parameter        & mean           & variance& $c_v$  \\ \hline
 \rule{0pt}{4ex}  
$D_0$ &$\lambda$  & $\mu$ & $\sigma_{n,0}^2=\dfrac{\mu^2}{\lambda}\left(=:\sigma^2\right)$& $\dfrac{1}{\sqrt\lambda}$ \\ \rule{0pt}{4ex}  
$D_1$ &$n\lambda$  & $\mu$ & $\sigma_{n,1}^2=\dfrac{\mu^2}{n\lambda}\left(=\dfrac{\sigma^2}{n}\right)$& $\dfrac{1}{\sqrt{n\lambda}}$ \\ \rule{0pt}{4ex}  
$\overline{D}_1$ &$nm\lambda$  & $\mu$ & $\dfrac{\mu^2}{nm\lambda}\left(=\dfrac{\sigma^2}{nm}\right)$& $\dfrac{1}{\sqrt{nm\lambda}}$ \\ \hline
\end{tabular}
\label{tab:originaldemand}
%\begin{flushleft}
%\hspace{-0.1cm} \footnotesize \emph{Notes}: $c_v=0.32 \;(\lambda=10), p=100\%$. $q=15$ is optimal when $m=2$. $q=18$ is optimal when $m=3$.
%\end{flushleft}
\end{center}
\normalsize
\end{table}

\subsection{\textcolor{black}{Relative variance $\sigma_{rel}^2(p)$}}
\textcolor{black}{To find $\sigma_{n,p}^2$, it is convenient to use a new indicator, \emph{relative variance} $\sigma_{rel}^2(p)$. It represents the proximity of $\sigma_{n,p}^2$ to the sample mean variance; see Definition~\ref{relative}. Notice that the minimization of $\sigma_{rel}^2(p)$ is essentially equivalent to minimizing $\sigma_{n,p}^2(p)$. From now on, we will consider $\sigma_{rel}^2(p)$.}

\begin{definition}
\label{relative}
For all $n \ge 2$ and $p \in [0,1]$, relative variance under the $(n,p)$ opaque scheme is defined as
\small
\begin{equation}
\label{eq:definition}
\sigma_{rel}^2(p):=\frac{\sigma_{n,p}^2-\sigma_{n,1}^2}{\sigma_{n,0}^2-\sigma_{n,1}^2}
=\frac{\sigma_{n,p}^2-\frac{\sigma^2}{n}}{\sigma^2-\frac{\sigma^2}{n}}.
\end{equation}
\normalsize
\end{definition}
\textcolor{black}{Note that $\sigma_{n,p}^2$ takes $\sigma_{n,0}^2=\sigma^2$ when $p=0$ and takes $\sigma_{n,1}^2=\sigma^2/n$ when $p=1$ (see Lemma~\ref{bound}). By definition, we observe $\sigma_{rel}^2(p) \in [0,1]$, $\sigma_{rel}^2(1) =1$, and $\sigma_{rel}^2(0) =0$. If we know $\sigma_{rel}^2(p)$ (which we simply denote as $\sigma_{rel}^2$ unless there is confusion), we can derive $\sigma_{n,p}^2$ as
\small
\begin{equation}
\label{eq:relativevariance}
{\sigma_{n,p}^2=\frac{1+(n-1)\sigma_{rel}^2}{n} \sigma^2}.
\end{equation}
\normalsize}

\textcolor{black}{The new indicator $\sigma_{rel}^2$ is introduced primarily for mathematical convenience; however, $\sigma_{rel}^2$ has a relationship with a popular indicator, an average correlation coefficient $\rho_p$ of adjusted demands. We observe that a reduction of $\sigma_{rel}^2 \;{(=\sigma_{rel}^2(p))}$ makes all adjusted demands closer to their sample mean, and therefore, increases an average correlation coefficient $\rho_p$ of adjusted demands. For example, it is expected that $\sigma_{rel}^2(0)=1$ and $\rho_0=0$ at $p=0$, and $\sigma_{rel}^2(1)=0$ and $\rho_1=1$ at $p=1$. The following lemma tells us the relationship between $\sigma_{rel}^2$ and $\rho_p$ for any $p \in [0,1]$.}
\begin{lemma}
\label{involution}
Consider the $(n,p)$ opaque scheme, where $n \ge 2$. Relative variance and correlation coefficient satisfy the following relationships.
\small
\begin{equation*}
\sigma_{rel}^2(p) =\frac{1-\rho_p}{1+(n-1)\rho_p},
\end{equation*}
\normalsize
or equivalently,
\small
\begin{equation}
\label{rho}
\rho_p =\frac{1-\sigma_{rel}^2(p)}{1+(n-1)\sigma_{rel}^2(p)}.
\normalsize
\end{equation}
\end{lemma}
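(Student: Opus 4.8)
The plan is to establish the identity $\sigma_{rel}^2(p) = \tfrac{1-\rho_p}{1+(n-1)\rho_p}$ by expressing both $\sigma_{n,p}^2$ and the average correlation coefficient $\rho_p$ in terms of the same basic quantities: the average variance $\sigma_{n,p}^2 = \tfrac1n\sum_i \mathrm{var}(D_{\boldsymbol p}^i)$ and the average covariance of distinct adjusted demands. First I would introduce $\bar\sigma_p^2 := \tfrac1n\sum_i \mathrm{var}(D_{\boldsymbol p}^i) = \sigma_{n,p}^2$ and $\bar c_p := \tfrac{1}{n(n-1)}\sum_{i\neq j}\mathrm{cov}(D_{\boldsymbol p}^i, D_{\boldsymbol p}^j)$, and define the average correlation coefficient consistently as $\rho_p := \bar c_p / \bar\sigma_p^2$ (the natural "ratio of averages" definition; I would state this normalization explicitly since the lemma refers to "an average correlation coefficient"). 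The target identity is then purely algebraic once I have one more relation linking $\bar\sigma_p^2$ and $\bar c_p$.

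That missing relation comes from the conservation assumption $\sum_i D_{\boldsymbol p}^i = \sum_i D_0^i$ for all $\boldsymbol p$, which forces $\mathrm{var}\!\left(\sum_i D_{\boldsymbol p}^i\right)$ to be independent of $\boldsymbol p$ and equal to $\sum_i \sigma_i^2 = n\sigma^2$ (using independence of the original demands). Expanding the variance of the sum, $\mathrm{var}\!\left(\sum_i D_{\boldsymbol p}^i\right) = \sum_i \mathrm{var}(D_{\boldsymbol p}^i) + \sum_{i\neq j}\mathrm{cov}(D_{\boldsymbol p}^i,D_{\boldsymbol p}^j) = n\bar\sigma_p^2 + n(n-1)\bar c_p$. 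Setting this equal to $n\sigma^2$ gives the key linear constraint $\bar\sigma_p^2 + (n-1)\bar c_p = \sigma^2$, i.e. $\bar c_p = \tfrac{\sigma^2 - \bar\sigma_p^2}{n-1}$. Substituting $\bar\sigma_p^2 = \sigma_{n,p}^2$ and $\bar c_p = \rho_p\sigma_{n,p}^2$ and solving for $\sigma_{n,p}^2$ yields $\sigma_{n,p}^2 = \tfrac{\sigma^2}{1+(n-1)\rho_p}$.

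Finally I would plug this into the definition of relative variance. From Definition~\ref{relative}, $\sigma_{rel}^2(p) = \dfrac{\sigma_{n,p}^2 - \sigma^2/n}{\sigma^2 - \sigma^2/n} = \dfrac{n\sigma_{n,p}^2 - \sigma^2}{(n-1)\sigma^2}$; substituting $\sigma_{n,p}^2 = \sigma^2/(1+(n-1)\rho_p)$ and simplifying gives
\begin{equation*}
\sigma_{rel}^2(p) = \frac{n/(1+(n-1)\rho_p) - 1}{n-1} = \frac{n - 1 - (n-1)\rho_p}{(n-1)(1+(n-1)\rho_p)} = \frac{1-\rho_p}{1+(n-1)\rho_p},
\end{equation*}
which is the claimed relation; inverting the Möbius-type map (it is an involution up to the parameter $n$) gives the equivalent form~\eqref{rho}. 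The main obstacle is not the algebra but pinning down the right definition of $\rho_p$: I must be careful that "average correlation coefficient" means the ratio of average covariance to average variance (equivalently, that all pairwise variances and covariances are symmetric enough — which holds here because the $D_0^i$ are i.i.d. with constant $p$, so the adjusted demands are exchangeable and every $\mathrm{var}(D_{\boldsymbol p}^i)$ and every $\mathrm{cov}(D_{\boldsymbol p}^i,D_{\boldsymbol p}^j)$ is the same). Under exchangeability the distinction between "average of the correlations" and "ratio of averages" disappears, so I would invoke exchangeability of the adjusted demands early on to justify working with a single common variance and covariance, after which the derivation above goes through cleanly.
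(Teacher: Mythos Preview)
Your proposal is correct and follows essentially the same route as the paper: both arguments use the conservation identity $\sum_i D_{\boldsymbol p}^i=\sum_i D_0^i$ to equate $\mathrm{var}\!\left(\sum_i D_{\boldsymbol p}^i\right)$ with $n\sigma^2$, expand that variance as $n\sigma_{n,p}^2\bigl(1+(n-1)\rho_p\bigr)$, solve for $\sigma_{n,p}^2=\sigma^2/\bigl(1+(n-1)\rho_p\bigr)$, and substitute into the definition of $\sigma_{rel}^2(p)$. Your added remark that exchangeability of the adjusted demands collapses ``average of correlations'' and ``ratio of averages'' is a nice clarification the paper leaves implicit.
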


\begin{remark}
Lemma~\ref{involution} indicates that $\sigma_{rel}^2(p)$ and $\rho_p$ satisfy the involution property: The inverse operation and the forward operation are the same.
\end{remark}

\subsection{\textcolor{black}{Exact and approximate representations of $\sigma_{rel}^2(p)$: $n=2$ case}}
We now consider the $n=2$ case and find the exact analytical expression for $\sigma_{rel}^2$. First, we note the following proposition.
\begin{proposition}
\label{varepsilon}
Consider the $(n,p)$ opaque scheme with $n=2$. Define a random variable ${T=X_p^1-X_p^2-X^0_p}$ and its variance $\sigma_T=var(T)$. Relative variance of adjusted demand is exactly represented as a function of~$T$:
\begin{equation}
\label{exact}
\sigma_{rel}^2=\frac{2\mathbb{E}[T^2|T>0]Pr(T>0)}{\sigma_T^2}.
\end{equation}
\end{proposition}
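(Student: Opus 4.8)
The plan is to reduce the relative variance to the variance of a single one-dimensional quantity---the difference of the two adjusted demands---and then to read that difference off from the BPD rule. First I would specialize to $n=2$, so $\mu^1=\mu^2=\mu$, $var(D_0^1)=var(D_0^2)=\sigma^2$, and $D_0^1,D_0^2$ are i.i.d. Set $\Sigma:=D_p^1+D_p^2$ and $\Delta:=D_p^1-D_p^2$. By the total-demand assumption $\Sigma=D_0^1+D_0^2$, so $var(\Sigma)=2\sigma^2$; and $\mathbb{E}[D_p^i]=\mu$ forces $\mathbb{E}[\Delta]=0$. Writing $D_p^1=(\Sigma+\Delta)/2$, $D_p^2=(\Sigma-\Delta)/2$ and averaging the two variances gives $\sigma_{2,p}^2=\tfrac{1}{4}\big(var(\Sigma)+var(\Delta)\big)$. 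Substituting this, together with $\sigma_{2,0}^2=\sigma^2$ and $\sigma_{2,1}^2=\sigma^2/2$ (Lemma~\ref{bound}), into Definition~\ref{relative} makes everything telescope to $\sigma_{rel}^2=var(\Delta)/(2\sigma^2)=\mathbb{E}[\Delta^2]/(2\sigma^2)$. So it suffices to compute $\mathbb{E}[\Delta^2]$.

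Next I would identify $\Delta$ from the BPD rule. Since $\mu^1=\mu^2$, Corollary~\ref{policy} pours the whole opaque demand $X_p^0$ onto whichever non-opaque demand $X_p^i$ is currently smaller, bringing the two to equality and splitting any leftover evenly. Letting $T=X_p^1-X_p^2-X_p^0$ and its mirror $T':=X_p^2-X_p^1-X_p^0$, a short case check gives: if $T>0$ all of $X_p^0$ is added to product $2$ and $\Delta=T$; if $T'>0$ all of it is added to product $1$ and $\Delta=-T'$; and if neither is positive, i.e. $|X_p^1-X_p^2|\le X_p^0$, the adjusted demands are made exactly equal and $\Delta=0$. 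Since $T+T'=-2X_p^0\le0$, at most one of $T,T'$ is positive, so $\Delta^2=T^2\mathbf{1}\{T>0\}+(T')^2\mathbf{1}\{T'>0\}$ (the cross term drops out). By the exchangeability of the two products ($D_0^1,D_0^2$ i.i.d. and symmetric switching), $T$ and $T'$ are equal in law, hence $\mathbb{E}[\Delta^2]=2\,\mathbb{E}[T^2\mathbf{1}\{T>0\}]=2\,\mathbb{E}[T^2\mid T>0]\,Pr(T>0)$. Combined with the first step, $\sigma_{rel}^2=\mathbb{E}[T^2\mid T>0]Pr(T>0)/\sigma^2$.

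The last step converts the denominator $\sigma^2$ into $\sigma_T^2$ by showing $\sigma_T^2=2\sigma^2$. Using $X_p^0=\Sigma-X_p^1-X_p^2$ one gets $T=2X_p^1-\Sigma$, and since $X_p^1$ is built only from product $1$'s customers it is independent of $D_0^2$, so $cov(X_p^1,\Sigma)=cov(X_p^1,D_0^1)$ and $\sigma_T^2=4\,var(X_p^1)-4\,cov(X_p^1,D_0^1)+2\sigma^2$. Under the scaled-Poisson model, Poisson thinning splits product $1$'s customer stream into a retained stream ($\sim Pois(\lambda(1-p))$) and a switched stream ($\sim Pois(\lambda p)$) that are independent, so $X_p^1$ and $D_0^1-X_p^1$ are independent, giving $cov(X_p^1,D_0^1)=var(X_p^1)$ and therefore $\sigma_T^2=2\sigma^2$. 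Replacing $\sigma^2$ by $\sigma_T^2/2$ in the formula from the previous paragraph yields exactly~\eqref{exact}. I expect the middle paragraph to be the crux---turning the verbal BPD rule into the clean piecewise description of $\Delta$, and in particular checking that when neither $T$ nor $T'$ is positive the allocation equalizes the two adjusted demands on the nose so that the cross term in $\Delta^2$ vanishes---together with the identity $\sigma_T^2=2\sigma^2$, which is the single place where the scaled-Poisson (thinning) assumption is actually used.
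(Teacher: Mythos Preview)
Your argument is correct and matches the paper's: both reduce $\sigma_{rel}^2$ to $var(D_p^1-D_p^2)$ via the sum/difference decomposition and then evaluate the difference through the three-case BPD split (the paper phrases this as a partition $A_1,A_2,A_3$ handled by the law of total variance rather than your indicators-plus-exchangeability, but the content is identical). The one place you take a slightly longer path is the identity $\sigma_T^2=2\sigma^2$: the paper simply notes at the outset that under thinning $X_p^1,X_p^2,X_p^0$ are mutually independent, so $var(X_p^1-X_p^2-X_p^0)=var(X_p^1)+var(X_p^2)+var(X_p^0)=var(X_p^1+X_p^2+X_p^0)=var(\Sigma)$, which is more direct than your detour through $T=2X_p^1-\Sigma$ and $cov(X_p^1,D_0^1)=var(X_p^1)$.
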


\textcolor{black}{Equation~\eqref{exact} in Proposition~\ref{varepsilon} is exact, but is not convenient when we evaluate $\sigma_{rel}^2$. We show an approximation in Corollary~\ref{relativevariance}.}
\begin{corollary}
\label{relativevariance}
Let $D_0$ follow a scaled Poisson distribution with $c_v=1/\sqrt{\lambda}$. Denote $\alpha=p\sqrt{2\lambda}=\frac{p}{c_v}\sqrt{2}$. Using the standard normal CDF $\Phi(\cdot)$ and PDF $\phi(\cdot)$, we obtain the following approximation.
\begin{equation}
\label{sigma-rel}
\sigma_{rel}^2 \approx 2(1+\alpha^2)\Phi(-\alpha)-2\alpha\phi(\alpha).
\end{equation}
\end{corollary}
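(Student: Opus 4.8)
The plan is to run the exact identity of Proposition~\ref{varepsilon} through a Gaussian approximation of $T=X_p^1-X_p^2-X^0_p$. The first task is to identify the law of $T$. Since $D_0^i$ is scaled Poisson, write $D_0^i=\frac{\mu}{\lambda}Y_\lambda^i$ with $Y_\lambda^i\sim Pois(\lambda)$ independent across $i$; each of the $Y_\lambda^i$ customers for product $i$ independently switches to the opaque product with probability $p$, so by the thinning property of Poisson variables the ``stay'' counts $S^1,S^2$ are i.i.d.\ $Pois((1-p)\lambda)$, the ``switch'' counts $Z^1,Z^2$ are i.i.d.\ $Pois(p\lambda)$, and $S^1,S^2,Z^1,Z^2$ are mutually independent, with $X_p^i=\frac{\mu}{\lambda}S^i$ and $X_p^0=\frac{\mu}{\lambda}(Z^1+Z^2)$. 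Hence $T=\frac{\mu}{\lambda}\big(S^1-(S^2+Z^1+Z^2)\big)$ is $\frac{\mu}{\lambda}$ times a Skellam variable with parameters $(1-p)\lambda$ and $(1+p)\lambda$; in particular $\mathbb{E}[T]=-2p\mu$ and $\sigma_T^2=var(T)=\frac{2\mu^2}{\lambda}=2\sigma^2$, notably independent of $p$.

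Next, for a moderate-to-large base Poisson parameter $\lambda$ (equivalently, small $c_v=1/\sqrt\lambda$) I replace the Skellam law of $\lambda T/\mu$ by a normal one, i.e.\ I write $T\approx\sigma_T(U-\alpha)$ with $U\sim N(0,1)$ and $\alpha:=-\mathbb{E}[T]/\sigma_T=\frac{2p\mu}{\sqrt2\,\sigma}=p\sqrt{2\lambda}=\frac{p}{c_v}\sqrt2$, which is exactly the $\alpha$ of the statement. Under this substitution $\{T>0\}=\{U>\alpha\}$ and $T^2=\sigma_T^2(U-\alpha)^2$, so the exact formula $\sigma_{rel}^2=2\,\mathbb{E}[T^2|T>0]\,Pr(T>0)/\sigma_T^2$ of Proposition~\ref{varepsilon} collapses to $\sigma_{rel}^2\approx 2\int_\alpha^\infty(u-\alpha)^2\phi(u)\,du$.

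Finally, I evaluate this truncated second moment. Expanding $(u-\alpha)^2=u^2-2\alpha u+\alpha^2$ and using $\int_\alpha^\infty\phi(u)\,du=\Phi(-\alpha)$, $\int_\alpha^\infty u\phi(u)\,du=\phi(\alpha)$ (from $\phi'(u)=-u\phi(u)$), and $\int_\alpha^\infty u^2\phi(u)\,du=\Phi(-\alpha)+\alpha\phi(\alpha)$ (one integration by parts with $u\cdot u\phi(u)$), I get $\int_\alpha^\infty(u-\alpha)^2\phi(u)\,du=(1+\alpha^2)\Phi(-\alpha)-\alpha\phi(\alpha)$, and doubling gives $\sigma_{rel}^2\approx 2(1+\alpha^2)\Phi(-\alpha)-2\alpha\phi(\alpha)$, as claimed; as a sanity check, $\alpha=0$ returns $\sigma_{rel}^2\approx 1$ and $\alpha\to\infty$ returns $\sigma_{rel}^2\to 0$, matching the known values at $p=0$ and $p=1$.

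The only non-routine ingredient is the middle step: the ``$\approx$'' hides replacing a lattice (Skellam) law by a continuous Gaussian, so to make the claim precise one should bound the resulting error in $\sigma_{rel}^2$ — it is $O(1/\sqrt\lambda)$, so the formula is asymptotically exact as $c_v\to 0$ and is expected to lose accuracy only for small $\lambda$; a Berry--Esseen estimate for the Skellam law, or a continuity correction, could be invoked but changes only lower-order terms. The thinning computation of the first step and the elementary Gaussian integrals of the last step are standard.
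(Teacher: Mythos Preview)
Your proof is correct and follows essentially the same route as the paper: approximate the (Skellam-type) law of $T$ by a normal, then evaluate the formula of Proposition~\ref{varepsilon}. The paper quotes the truncated-normal moments for $\mathbb{E}[T\mid T>0]$ and $var(T\mid T>0)$ and combines them, whereas you compute $\mathbb{E}[T^2\mathbf{1}_{\{T>0\}}]$ directly via the integral, but this is a purely cosmetic difference; your explicit identification of the Skellam parameters via Poisson thinning is a helpful addition.
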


Equation~\eqref{sigma-rel} shows that relative variance $\sigma_{rel}^2$ is a function of $\alpha\; (\propto p/c_v)$; thus, we know that adjusted variance $\sigma_{n,p}^2$ does not change if both $p$ and $c_v$ proportionally change. This property provides a useful tip for practitioners when implementing our opaque scheme. Specifically, if we observe a change of $c_v$ in product demands, we should change $p$ proportionally in order to maintain the same level of $\sigma_{n,p}^2$ (see Figure~\ref{fig:p-cv-a5} and the discussion therein).  Note that Equation~\eqref{sigma-rel} is obtained for $n=2$; however, it can be used to obtain an approximate value of $\sigma_{rel}^2$ for any $n$ since $\sigma_{rel}^2$ is not sensitive to $n$ according to our numerical experiment (see Figure~\ref{fig:p-npr-a2} and the discussion therein).

\section{Perishable Inventory System}
\label{sec4}
%An illustrative example of the operation of our opaque scheme would be the following: A seller stocks up new perishable items up to the prefixed constant number of units at the beginning of each period. She accepts orders for regular (non-opaque) products along with orders for opaque product that does not specify which regular product a customer receives. At the end of business hours, she might observe high demand on some products while low on others; following BPD, she utilizes opaque orders to average out all regular product demands. She ships all products to customers based on the adjusted demands of regular products (unless there's a shortage). Under this scheme, the seller can control two parameters to improve the performance of their operation of perishables: the number ($n$) of products they use for the opaque scheme, and the proportion ($p$) representing the shift of demand from each regular product to opaque product. We analyze the impact of $n$ and $p$ on the total cost of operation and provide some insights, so that practitioners can manage perishable inventory systems close to its optimality by implementing the opaque scheme.
In this section, we analyze the effect of our opaque scheme on the total cost of a single product lost-sales perishable inventory system \cite{nahmias1973optimal}. As in the previous sections, original demands of all non-opaque products are represented by an i.i.d. scaled Poisson random variable $D_0$ with mean ${\mu\: (=\mathbb{E}[D])}$ and variance ${\sigma^2\:(=\mu^2/\lambda)}$. We assume all orders are placed at the start of each period to make the inventory level equal to the base-stock level $q$ units; all ordered items arrive instantaneously and are new; and inventory is depleted following a FIFO issuance rule. If any orders are not fulfilled due to a lack of inventory, they are lost and incur a per-unit shortage cost $r$ (sales price minus purchase cost). If any products in inventory are not sold within $m$ shelflife periods, they are discarded and incur a per-unit wastage cost~$\theta$ (purchase cost plus recycling/landfill/compost cost minus salvage value of an outdated product). At the end of business hours, a seller implements an $(n,p)$ opaque scheme with BPD (i.e., fulfilling opaque orders using non-opaque products with least orders).
%Adjusted demand $D_1$ is a sample mean of $n$ i.i.d. product demand $D_0$: $D_1=\sum_i D_0^i /n)$.

Let $S$ and $W$ be the number of shortages (lost sales) and wastages per period per product, respectively. We define the total cost as $C=rS+\theta W$. \textcolor{black}{We consider that the total holding cost is fixed with negligible per-product holding cost, and omit it from the total cost.} Our goal is to minimize the expected total cost $\mathbb{E}[C]$ utilizing the $(n,p)$ opaque scheme with BPD.

Denote CDF and PMF of a discrete random variable $Y$ as $F_{Y}(\cdot)$ and $P_{Y}(\cdot)$, respectively. Table~\ref{tab:originaldemand} summarizes the definitions of three scaled Poisson random demands we use: $D_0$, $D_1$, and $\overline{D}_1$. Let $s=\left \lfloor{\frac{n\lambda q}{\mu}}\right \rfloor$ and  $Y_\gamma \sim Pois(\gamma)$. We obtain the following proposition.
\begin{proposition}
\label{wastage}
Consider implementing the $(n,1)$ opaque scheme for perishable products with $m$-period shelflife.  The expected per-period per-product shortages, wastages, and total cost satisfy that, for any base-stock level $q\ge0$,
\small
\begin{equation}
\label{expshortage}
\mathbb{E}[S] = \mathbb{E}\left[D_1-q\right]^+=\left(\mu-q\right)(1-F_{Y_{n\lambda}}(s))+\mu P_{Y_{n\lambda}}(s),
\end{equation}
\begin{equation}
\label{expwastage}
m \mathbb{E}\left[\frac{q}{m}-\overline{D}_1\right]^+ \geq\mathbb{E}[W] \geq \mathbb{E}\left[\frac{q}{m}-\overline{D}_1\right]^+=\left(\frac{q}{m}-\mu \right)F_{Y_{nm\lambda}}(s)+\mu P_{Y_{nm\lambda}}(s).
\end{equation}
\normalsize
Thus, denoting \small $\overline{C}_{LB}=r\mathbb{E}\left[D_1-q\right]^+ + \theta \mathbb{E}\left[\frac{q}{m}-\overline{D}_1\right]^+$, $\overline{C}_{LB}\le \mathbb{E}[C]\le m \overline{C}_{LB}$ \normalsize holds.
\end{proposition}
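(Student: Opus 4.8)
The plan is to pin down the shortage term exactly, sandwich the wastage term with a two‑sided bound, and then assemble the total‑cost inequality.

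\emph{Shortage.} First I would note that under the $(n,1)$ scheme with BPD (Corollary~\ref{policy}) every non‑opaque product receives in each period exactly the sample mean $D_1=\frac{1}{n}\sum_j D_0^j$ of the $n$ original demands, and that these per‑period demands are i.i.d.\ across periods because the $D_0^j$ are i.i.d.\ across both products and periods; by the scaled‑Poisson construction $D_1=\frac{\mu}{n\lambda}Y_{n\lambda}$ (Table~\ref{tab:originaldemand}). Since replenishment is instantaneous and raises the stock to $q$, and the carried‑in level never exceeds $q$, the on‑hand level at the start of every period (after ordering) is exactly $q$; hence the lost sales in a period equal $[D_1-q]^+$ irrespective of the ages of those $q$ units, and $\mathbb{E}[S]=\mathbb{E}[D_1-q]^+$. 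Writing the right‑hand side as $\frac{\mu}{n\lambda}\mathbb{E}[(Y_{n\lambda}-n\lambda q/\mu)^+]$ and applying the elementary identity $k\,P_{Y_\gamma}(k)=\gamma\,P_{Y_\gamma}(k-1)$ to re‑index the first‑moment tail sum (with $s=\lfloor n\lambda q/\mu\rfloor$) yields~\eqref{expshortage}.

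\emph{Wastage, lower bound.} Fix a block of $m$ consecutive periods $t,\dots,t+m-1$. Any unit discarded inside this block was ordered in one of the periods $t-m+1,\dots,t$, so it is one of the $q$ units on hand at the start of period $t$ (after ordering), and it is necessarily either sold or discarded within the block since it turns age $m$ by the end of period $t+m-1$. The portion of those $q$ units that is sold during the block cannot exceed the total demand there, which is distributed as $m\overline{D}_1$; so the total wastage over the block is at least $[q-m\overline{D}_1]^+$. Taking expectations, using stationarity ($\mathbb{E}[W_{t+j}]=\mathbb{E}[W]$), and dividing by $m$ gives $\mathbb{E}[W]\ge\mathbb{E}[\frac{q}{m}-\overline{D}_1]^+$; its closed form follows exactly as in the shortage case from $\overline{D}_1=\frac{\mu}{nm\lambda}Y_{nm\lambda}$ and the same re‑indexing, now applied to $\mathbb{E}[(n\lambda q/\mu-Y_{nm\lambda})^+]$.

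\emph{Wastage, upper bound.} Here $W_t$ is the quantity left, at the end of period $t$, from the batch $B$ ordered at period $\tau:=t-m+1$, since $B$ is the unique batch that reaches age $m$ at the end of period $t$. Write $q_\tau$ for the size of $B$; the other $q-q_\tau$ units on hand at the start of period $\tau$ are all older than $B$ and all expire by the end of period $t-1$. By FIFO, $B$ is consumed only after those $q-q_\tau$ older units and before any batch ordered after $\tau$, and while $B$ still holds stock there can be neither a lost sale nor consumption of a later batch. Hence the demand over periods $\tau,\dots,t$, distributed as $\mathcal{D}_t=m\overline{D}_1$, is absorbed first by the $\le q-q_\tau$ older units and then by $B$, so the amount consumed from $B$ is at least $\min\{q_\tau,\,[\mathcal{D}_t-(q-q_\tau)]^+\}$, whence $W_t=q_\tau-(\text{consumed from }B)\le[q-\mathcal{D}_t]^+$. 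Taking expectations, $\mathbb{E}[W]\le\mathbb{E}[q-m\overline{D}_1]^+=m\,\mathbb{E}[\frac{q}{m}-\overline{D}_1]^+$. This is where the real work lies: carefully justifying, through the age structure and the FIFO rule, that no lost sale and no consumption of a younger batch can occur while $B$ is nonempty is the main obstacle of the argument.

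\emph{Total cost.} Finally, since $C=rS+\theta W$ with $r,\theta\ge0$, the wastage sandwich gives $\mathbb{E}[C]\ge r\,\mathbb{E}[D_1-q]^+ +\theta\,\mathbb{E}[\frac{q}{m}-\overline{D}_1]^+=\overline{C}_{LB}$, and, bounding the shortage term by $m$ times itself ($m\ge1$), $\mathbb{E}[C]\le r\,\mathbb{E}[D_1-q]^+ +\theta m\,\mathbb{E}[\frac{q}{m}-\overline{D}_1]^+\le m\,\overline{C}_{LB}$.
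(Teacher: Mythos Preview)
Your proof is correct and follows essentially the same route as the paper. For the shortage identity and its closed form you do exactly what the paper does. For the wastage lower bound you spell out the block-counting argument that the paper simply attributes to Chazan and Gal \cite{chazan1977markovian}; your version is the same idea made explicit. For the wastage upper bound the paper argues by comparison with a ``fresh start'' system (all $q$ units new in period~1, so the period-$m$ wastage equals $[q-m\overline{D}_1]^+$ and dominates the stationary wastage because the expiring batch is then as large as possible), whereas you bound $W_t$ directly in steady state by tracking the batch $B$ through the FIFO queue; the two arguments yield the same pathwise inequality $W_t\le[q-m\overline{D}_1]^+$, so this is a difference of presentation rather than of substance, with your version being more self-contained.
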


According to Proposition~\ref{wastage}, $\mathbb{E}[C]\approx 0$ holds at a wider range of $q$ as $n \lambda$ increases since $F_{Y_{n\lambda}}(s)$ and $F_{Y_{nm\lambda}}(s)$ approach a step function at $q=\mu$ and $q=m\mu$, respectively, and both $P_{Y_{n\lambda}}(s)$ and $P_{Y_{nm\lambda}}(s)$ shrink (note: we observe this property experimentally in Figures~\ref{fig:totalcost} and \ref{fig:lam}). At the limit of $n \lambda \rightarrow \infty$, we observe $D_{1}, \overline{D}_1 \rightarrow \mu$ and $E[C] \rightarrow 0$ for any $q \in[\mu,m\mu]$. This insensitivity property was first pointed out by \cite{chazan1977markovian}; the property is important for practitioners who prefer their inventory systems to be robust.

\textcolor{black}{Proposition~\ref{wastage} can be used to find the condition to achieve $\mathbb{E}[C]\approx 0$. For this purpose, we introduce threshold parameters $n_{\text{th}}$ and ${\sigma_{\text{th}}^2}$ as Definition~\ref{threshold}. These parameters provide useful information for practitioners trying to achieve ${\mathbb{E}[C] \le m\delta}$ (where $\delta$ is sufficiently small; see Table~\ref{tab:lowerbound} and the discussion therein).
\begin{definition}
\label{threshold}
For sufficiently small $\delta$, the threshold variance~${\sigma_{\text{th}}^2}$ is defined as
\small
\begin{equation}
\label{eq:threshold}
\sigma^2_{\text{th}}=\max_{n \ge 2}\{\sigma^2_{n,1}\:|\:\overline{C}_\text{LB}\leq \delta\}.
\end{equation}
\normalsize
Also define the threshold number of non-opaque products as $n_{\text{th}}=n$ s.t. ${\sigma^2_{n,1}=\sigma^2_{\text{th}}}$.
\end{definition}
}

\section{Numerical Experiments}
\label{sec:numerical}
\subsection{\textcolor{black}{Simulation settings}}
\textcolor{black}{In this section, we numerically examine the effectiveness of our $(n,p)$ opaque scheme. We split the experiments into two parts. In the first part (\S\ref{5-2} and \S\ref{5-3}), we obtain a series of simulated adjusted demands $D_p$ under the opaque scheme, examine them, and evaluate the impact of the opaque scheme on the variance of adjusted demands $\sigma_{n,p}^2$ (or $\sigma_{rel}^2$); in the second part (\S\ref{5-4} to \S\ref{5-6}), we use the simulated $D_p$ to run a Monte Carlo simulation and evaluate the impact of the opaque scheme on the expected per-period total cost~$\mathbb{E}[C]$.}

We control variables as follows: the number of non-opaque products ${n \in [1,12]}$, proportion ${p\in[0,1]}$, base Poisson parameter ${\lambda\in\{4,6,8,10,12,14\}}$ which correspond to coefficient of variation $c_v\in\{0.50, 0.41, 0.35, 0.32, 0.29, 0.27\}$, and average demand $\mu=10$. We first generate 10,000 (sample values of) original demand $D_0$ for each one of the $n$ products. We then generate 10,000 sets of intermediate demands $X_p^0$ and $X_p^i, i \in \{1,\cdots,n\}$, from which we obtain 10,000 adjusted demand $D_p$ following BPD. These numerically-obtained $D_p$ are used for a 10,000-period Monte Carlo simulation for a perishable inventory model under a base-stock policy with a base-stock level $q\in [0,50]$. We allow demands and orders for inventory replenishment to take non-integer values in our experiments. \textcolor{black}{In this numerical study, we only consider shortage and wastage costs when evaluating the expected total cost~$\mathbb{E}[C]$, omitting a holding cost from consideration (see the discussion in \S~\ref{limitation}.)}
%Note that a holding cost has much less impact on $\mathbb{E}[C]$ than shortage and wastage costs (e.g., a holding cost is often set much smaller than shortage and wastage costs \cite{nahmias1975comparison, nahmias1977higher, nandakumar1993near, sasanuma2020marginal}.) 
%shortage and wastage costs to be equal ($r=\theta=1$; note that only the ratio $r/\theta$ affects the properties of $\mathbb{E}[C]$). We only show the $r=\theta=1$ case since it captures the impact of the opaque scheme well along with various $r=\theta$ cases we tested. If $r>\theta$ ($r<\theta$), the shortage has a larger (smaller) impact than the wastage, making the optimal base-stock level $q$ higher (lower, respectively); however, the fundamental result we obtain does not change. (Note that the $r/\theta=1$ case is commonly used as one of the base sets of cost parameters for perishable inventory systems .)

\subsection{\textcolor{black}{Impact of $c_v$ of non-opaque products on variance $\sigma_{n,p}^2$}}
\label{5-2}
Figure~\ref{fig:cv} examines the impact of $c_v$ of the original demand $D_0$ on $\sigma_{n,p}^2$ when there are 2 non-opaque products~${(n=2)}$. We observe in Figure~\ref{fig:p-cv-b} the convergence of adjusted demand variance to the sample mean variance for various $c_v$ values; however, the comparison is not easy due to different variance of original demands with various~$c_v$ values. Thus we plot relative variance $\sigma_{rel}^2$ in Figure~\ref{fig:p-cv-b2}. Our simulation result indicates that the speed of convergence is strongly affected by $c_v$ of $D_0$; specifically, as $c_v$ gets larger, $\sigma_{rel}^2$ converges to 0 more slowly. This property is expected since we need a higher opaque proportion $p$ to suppress the demand variability of $D_0$ when $D_0$ is highly variable.
\begin{figure}[H]
    \caption{The effect of $c_v$ (or $\lambda$) on $\sigma_{n,p}^2$ and $\sigma_{rel}^2$ for $n=2$.}
    \label{fig:cv}
     \begin{subfigure}[]{0.49\textwidth}
        \centering
        \includegraphics[scale=0.45]{./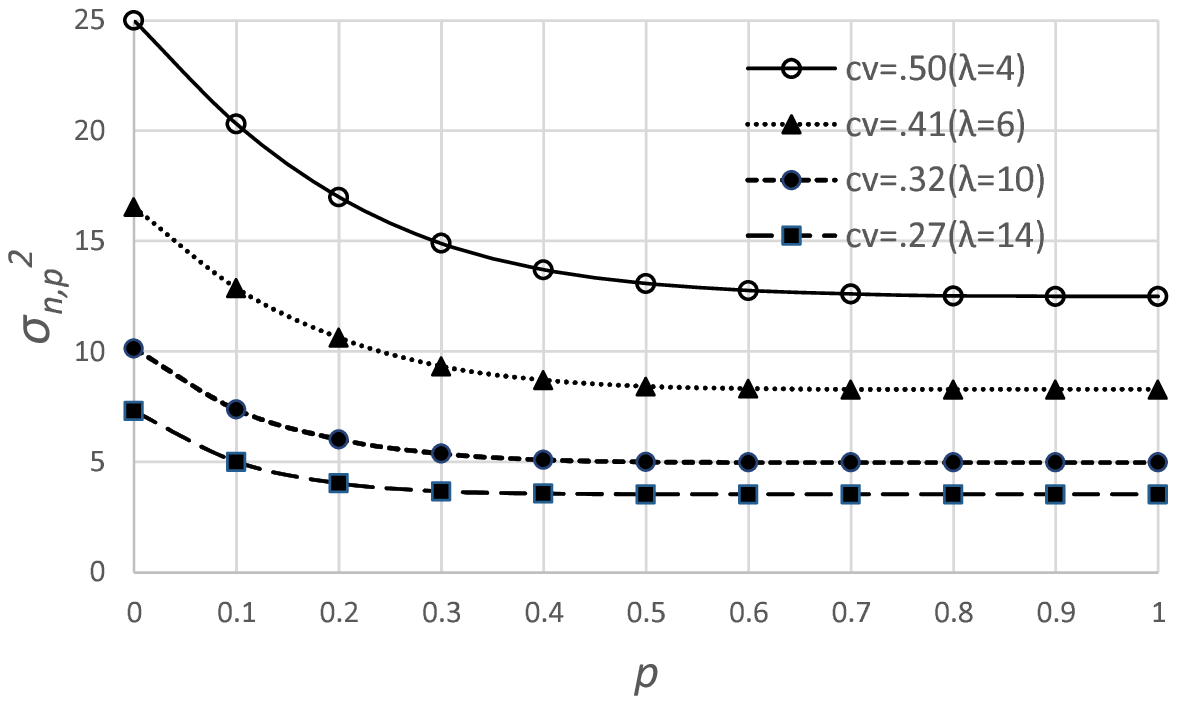}
        \caption{$\sigma_{n,p}^2$ vs $p$}
        \label{fig:p-cv-b}
    \end{subfigure}
    \begin{subfigure}[]{0.48\textwidth}
         \centering
        \includegraphics[scale=0.44]{./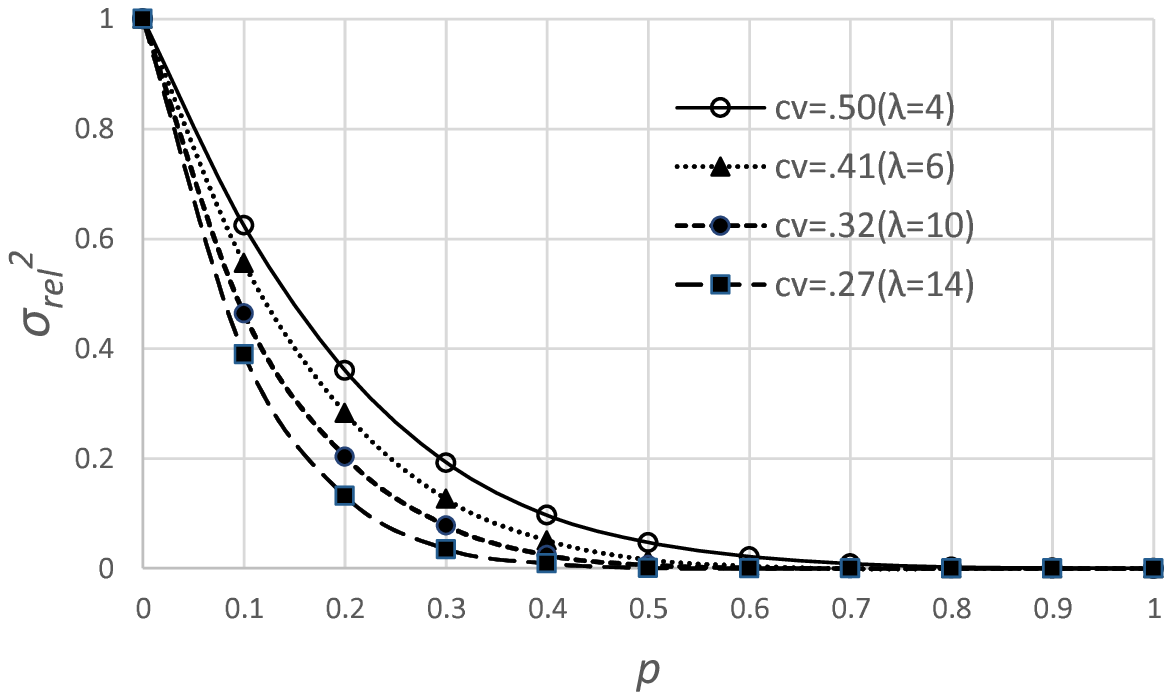}
        \caption{$\sigma_{rel}^2$ vs $p$}
        \label{fig:p-cv-b2}
   \end{subfigure}
    \centering
       \begin{subfigure}[]{0.49\textwidth}
         \centering
        \includegraphics[scale=0.45]{./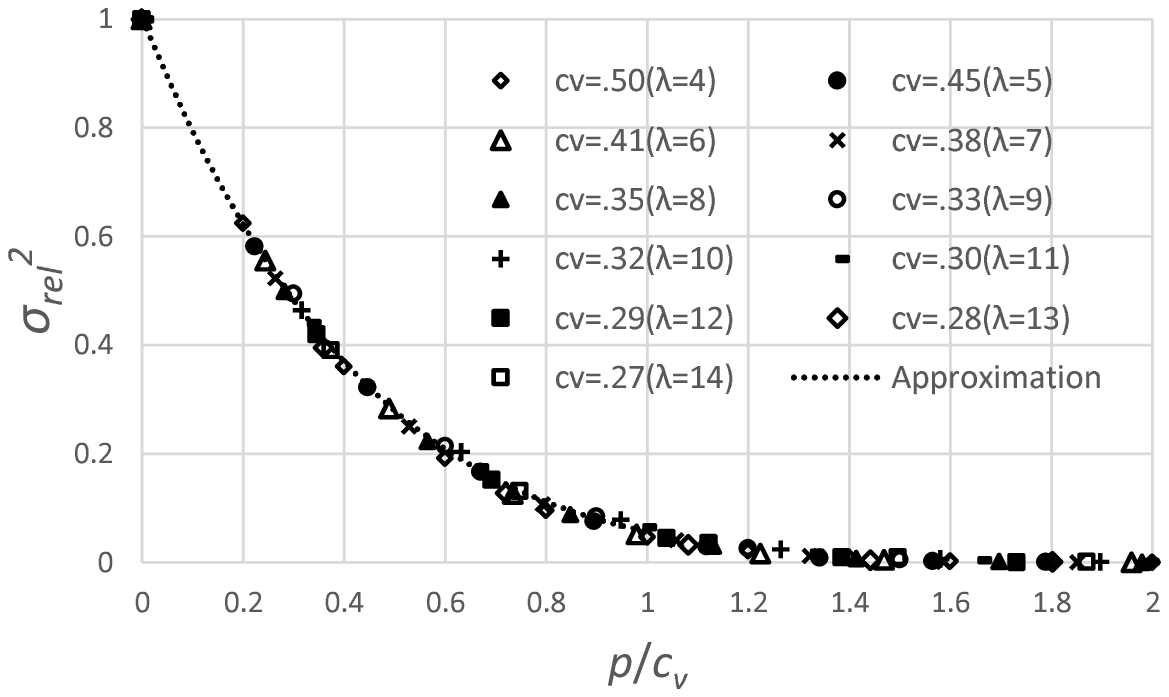}
        \caption{$\sigma_{rel}^2$ vs $p/c_v$}
        \label{fig:p-cv-a5}
   \end{subfigure}
\end{figure}
We further investigate this property in Figure~\ref{fig:p-cv-a5}. Here we plot $\sigma_{rel}^2$ as a function of $p/c_v$; the simulation results for various different parameters all align to the same curve: Equation~\eqref{sigma-rel}. This property is useful since Equation~\eqref{sigma-rel} (or Figure~\ref{fig:p-cv-a5}) determines $\sigma_{rel}^2$ for any $c_v$. For example, Figure~\ref{fig:p-cv-a5} indicates $\sigma_{rel}^2=0.2$ for $p/c_v=0.6$. Thus, if $c_v=0.5\; (\lambda=4)$, then $p=0.6 \cdot 0.5=0.3$ is required to achieve $\sigma_{rel}^2=0.2$ (or equivalently, $\sigma_{n,p}^2=12.5+0.2\cdot 12.5=15$ following Equation~\eqref{eq:relativevariance}), which is consistent with the result in Figure~\ref{fig:p-cv-b2}. We discuss $\sigma_{rel}^2$ only for the $n=2$ case, but the following result implies that $\sigma_{rel}^2$ is insensitive to $n$. 

\subsection{\textcolor{black}{Impact of the number of non-opaque products $n$ on variance $\sigma_{n,p}^2$}}
\label{5-3}

Figure~\ref{fig:p-npr-b2} examines the impact of the number of non-opaque products $n\; (\ge 2)$ on $\sigma_{n,p}^2$. To examine the speed of convergence ($\sigma_{n,p}^2 \rightarrow \sigma_{n,1}^2=\sigma^2/n$ as $p \rightarrow 1$), we fix ${c_v=0.32 \:(\lambda=10)}$ and plot $\sigma_{rel}^2$ for each $n$ in Figure~\ref{fig:p-npr-a2}. Figure~\ref{fig:p-npr-a2} shows an approximate insensitivity property: $\sigma_{rel}^2$ is \emph{almost} independent from $n$. Thus, for any $n$ or $c_v$, we can derive an approximate value of $\sigma_{n,p}^2$ using Equations~\eqref{eq:relativevariance} and \eqref{sigma-rel}. For example, Figure~\ref{fig:p-cv-a5} indicates that $\sigma_{rel}^2=0.2$ at $p/c_v=0.6$, thus, for $n=4$ and $c_v=0.32\: (\lambda=10)$, we obtain $\sigma_{n,p}^2=2.5+0.2\cdot 7.5=4$ at $p=0.6 \cdot 0.32=0.2$; this result is consistent with Figure~\ref{fig:p-npr-b2}.
\begin{figure}[H]
    \caption{The effect of $n$ on $\sigma_{n,p}^2$ and $\sigma_{rel}^2$ for $c_v=0.32 \;(\lambda=10)$.}
    \label{fig:npr}
    \centering
   \begin{subfigure}[]{0.49\textwidth}
        \centering
        \includegraphics[scale=0.45]{./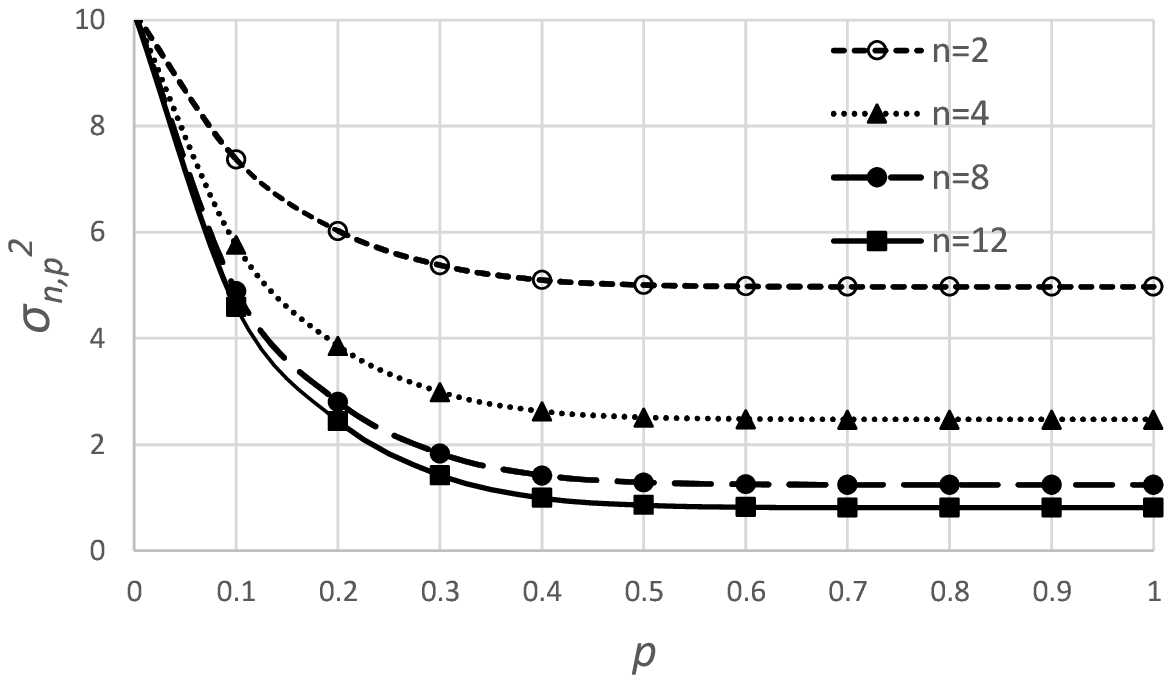}
        \caption{$\sigma_{n,p}^2$ vs $p$}
        \label{fig:p-npr-b2}
   \end{subfigure}
          \begin{subfigure}[]{0.49\textwidth}
     \centering
        \includegraphics[scale=0.45]{./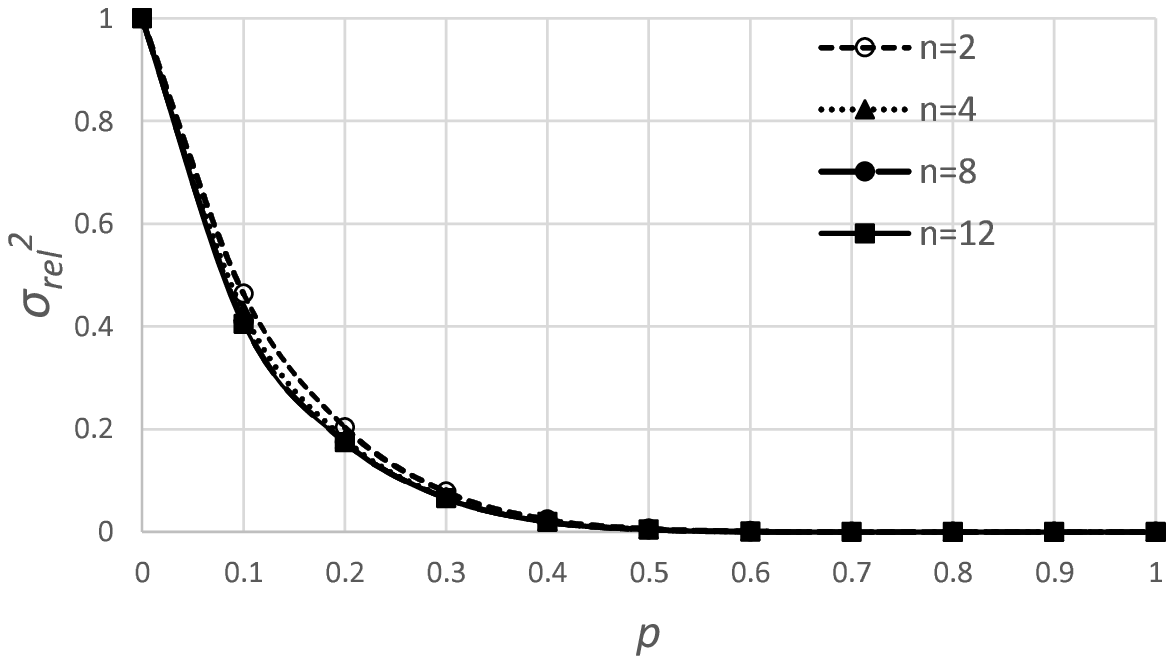}
        \caption{$\sigma_{rel}^2$ vs $p$}
        \label{fig:p-npr-a2}
    \end{subfigure}
\end{figure}

\subsection{Impact of the $(n,p)$ opaque scheme on the expected total cost $\mathbb{E}[C]$}
\label{5-4}
In the previous section, we observed that $\sigma_{n,p}^2$ of adjusted demand $D_p$ can be arbitrarily reduced by controlling $(n,p)$ under the opaque scheme. We now examine how $D_p$ and its variance $\sigma_{n,p}^2$ under the $(n,p)$ opaque scheme affect the expected per-period total cost $\mathbb{E}[C]=r\mathbb{E}[S]+\theta \mathbb{E}[W]$. We also find the condition to make $\mathbb{E}[C] \approx 0$.

Figure~\ref{fig:shortageholdingwastage} shows the impact of $p$ on $\mathbb{E}[S]$ and $\mathbb{E}[W]$ while fixing $c_v=0.32$ ($\lambda=10$) and $n=12$; $\mathbb{E}[S]$ and $\mathbb{E}[W]$ are then combined to obtain $\mathbb{E}[C]$ for $m=2$ in Figure~\ref{fig:m2ptotal} and for $m=3$ in Figure~\ref{fig:m3ptotal}. \textcolor{black}{We set $r=\theta=1$ to draw Figures~\ref{fig:m2ptotal} and~\ref{fig:m3ptotal}. (Note: only the ratio $r/\theta$ is important since $\mathbb{E}[C]$ has an arbitrary unit in our plots.)} We observe that $\mathbb{E}[C] \approx 0$ for a wider range of $q$ when the proportion $p$ is higher. Note that $\mathbb{E}[S]$ for $m=2$ and $m=3$ (Figures \ref{fig:m2shortage} and \ref{fig:m3shortage}, respectively) are identical since shortages do not depend on shelflife $m$, while $\mathbb{E}[W]$ (Figures \ref{fig:m2wastage} and \ref{fig:m3wastage}) show strong dependency on $m$; larger $m$ reduces $\mathbb{E}[W]$ given $q$. The optimal base-stock level, which is determined by the trade-off between shortage and wastage, thus depends on $m$ (as well as cost parameters, which we assume $r=\theta=1$); in this experiment we observe $q=15$ for $m=2$ (Figure~\ref{fig:m2ptotal}) and $q=18$ for $m=3$ (Figure~\ref{fig:m3ptotal}). Since $\mathbb{E}[W]$ is smaller for larger $m$, the optimal $\mathbb{E}[C]$ is smaller and the optimal base-stock level that minimizes $\mathbb{E}[C]$ is larger for larger $m$.
\begin{figure}[H]
    \caption{The effect of $p$ on shortages, wastages, and total cost for $c_v=0.32 \;(\lambda=10), n=12$.}
    \label{fig:shortageholdingwastage}
    \centering
    \begin{subfigure}[]{0.48\textwidth}
     \centering
        \includegraphics[scale=0.44]{./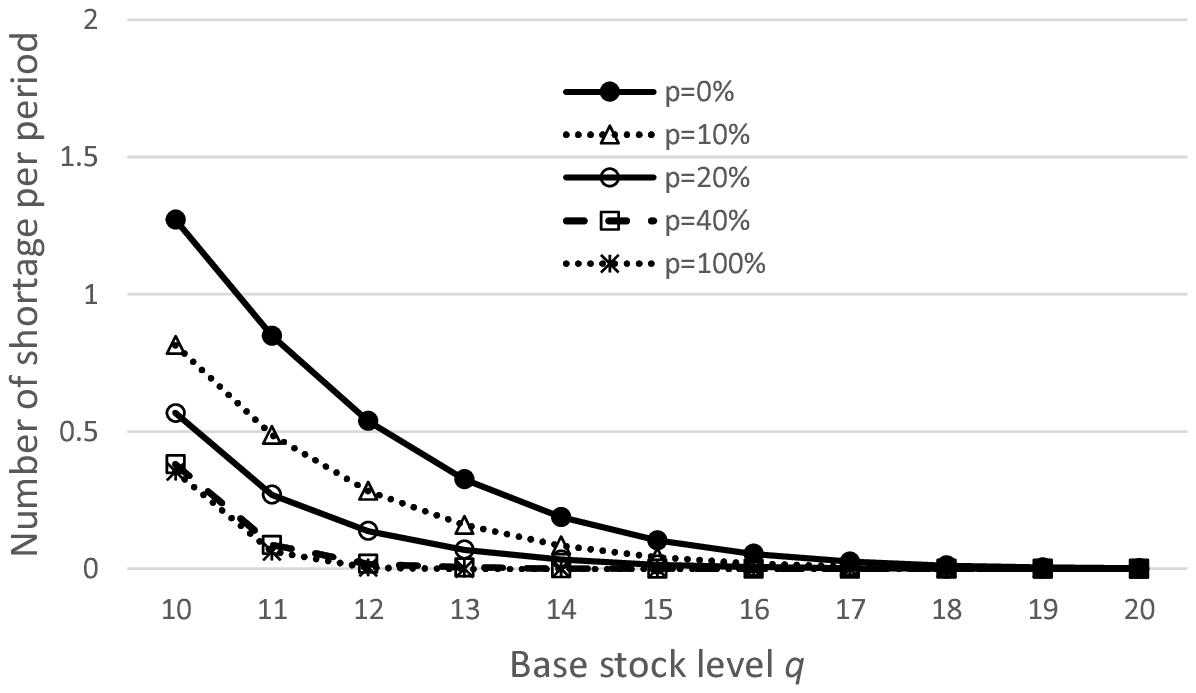}
        \caption{Shortages; $q\in [10,20]$}
        \label{fig:m2shortage}
   \end{subfigure}
       \centering
    \begin{subfigure}[]{.48\textwidth}
     \centering
        \includegraphics[scale=0.44]{./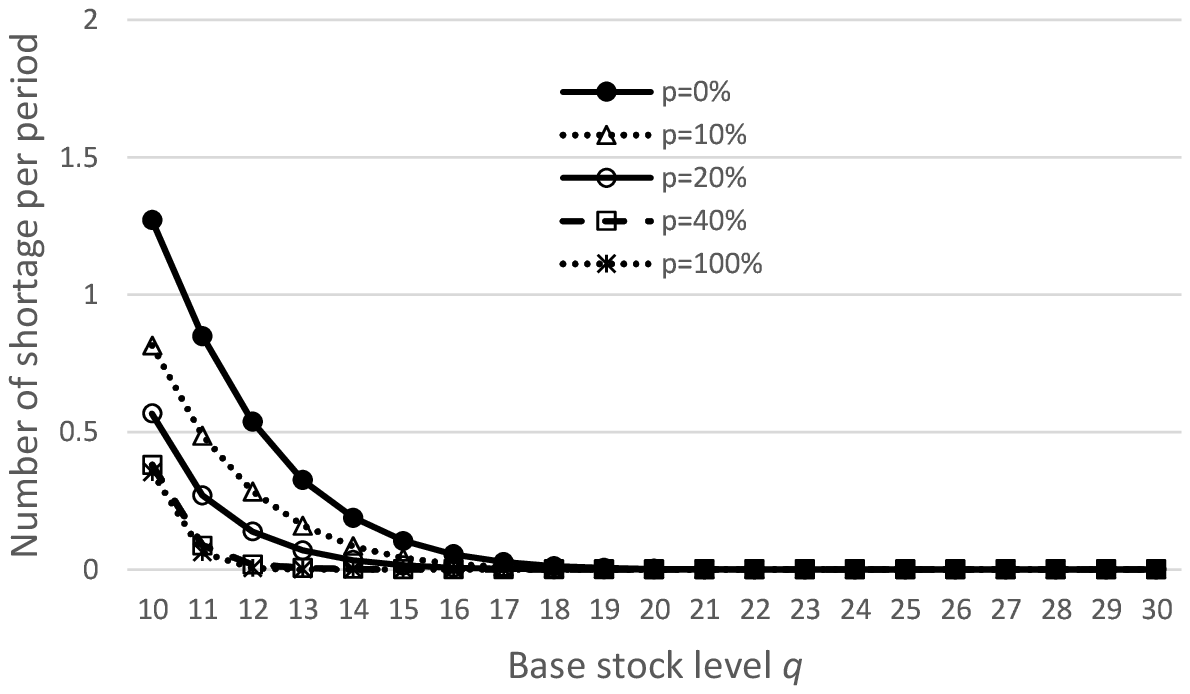}
        \caption{Shortages; $q\in [10,30]$}
        \label{fig:m3shortage}
   \end{subfigure}   
    \begin{subfigure}[]{0.48\textwidth}
     \centering
        \includegraphics[scale=0.44]{./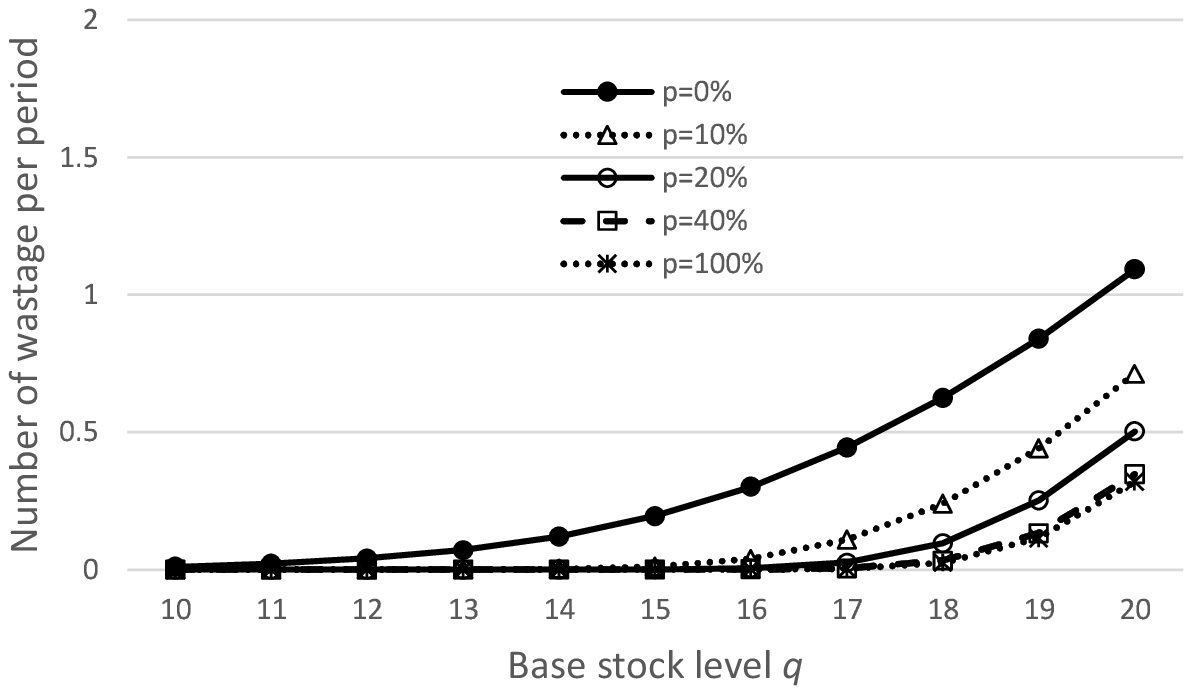}
        \caption{Wastages ($m=2$)}
        \label{fig:m2wastage}
   \end{subfigure}
     \begin{subfigure}[]{0.48\textwidth}
     \centering
        \includegraphics[scale=0.44]{./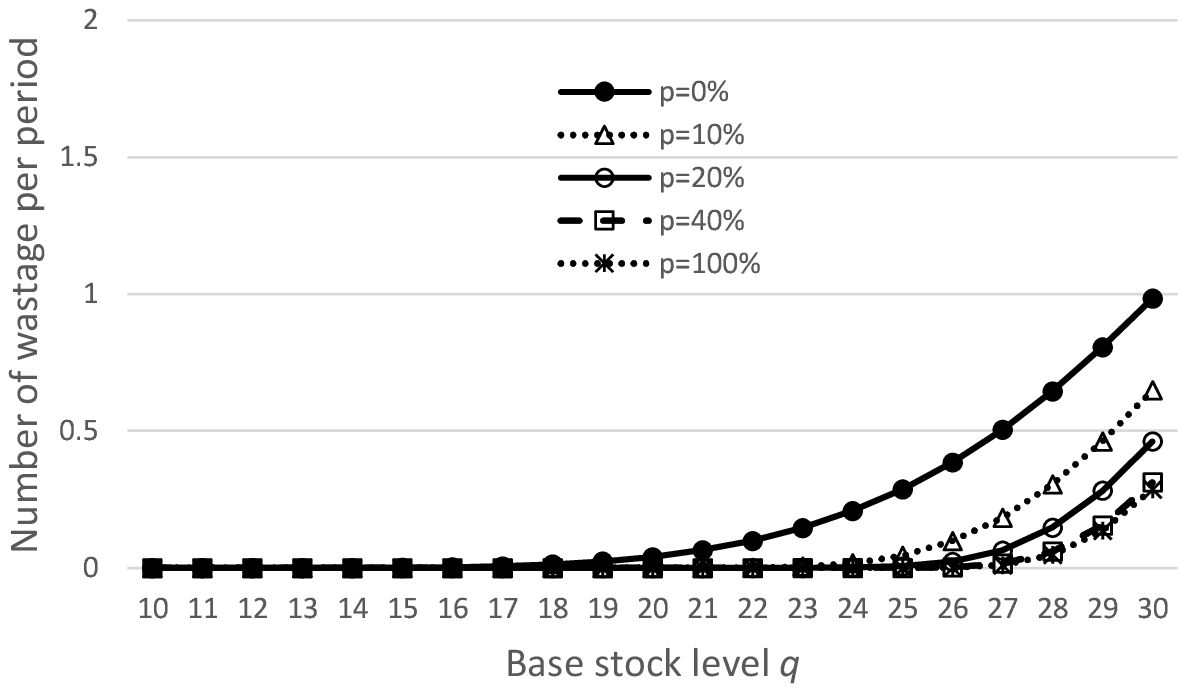}
        \caption{Wastages ($m=3$)}
        \label{fig:m3wastage}
    \end{subfigure}
        \begin{subfigure}[]{0.48\textwidth}
     \centering
        \includegraphics[scale=0.44]{./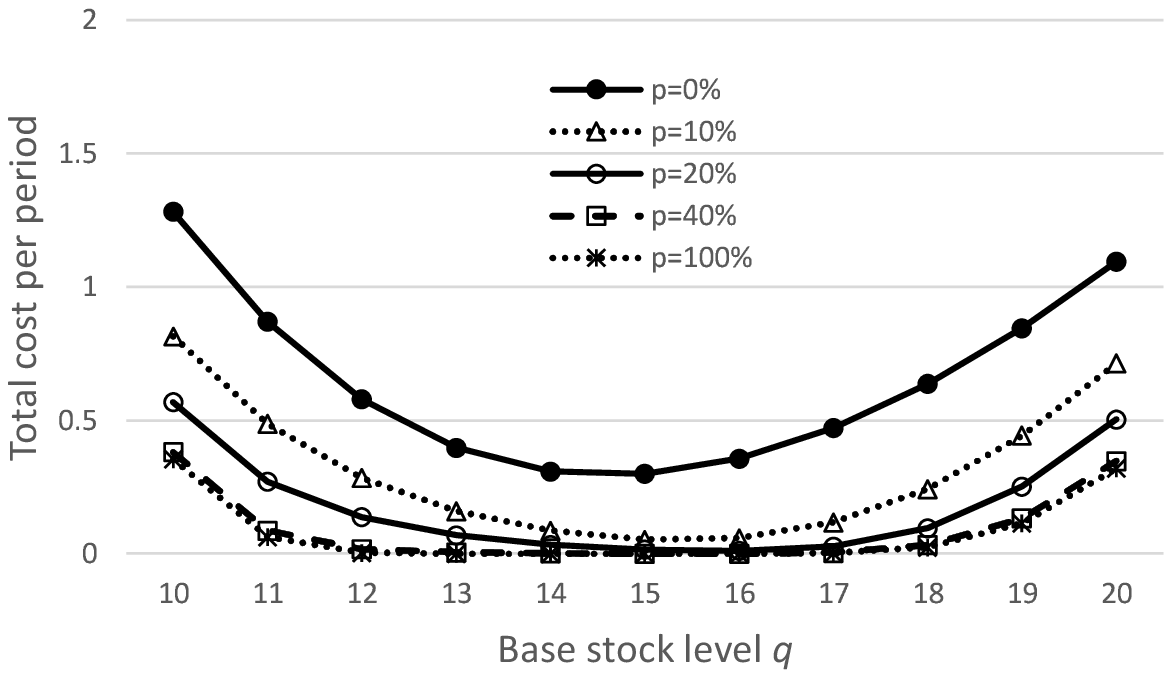}
        \caption{Total Cost ($m=2,r=1,\theta=1$)}
        \label{fig:m2ptotal}
   \end{subfigure}
     \begin{subfigure}[]{0.48\textwidth}
     \centering
        \includegraphics[scale=0.44]{./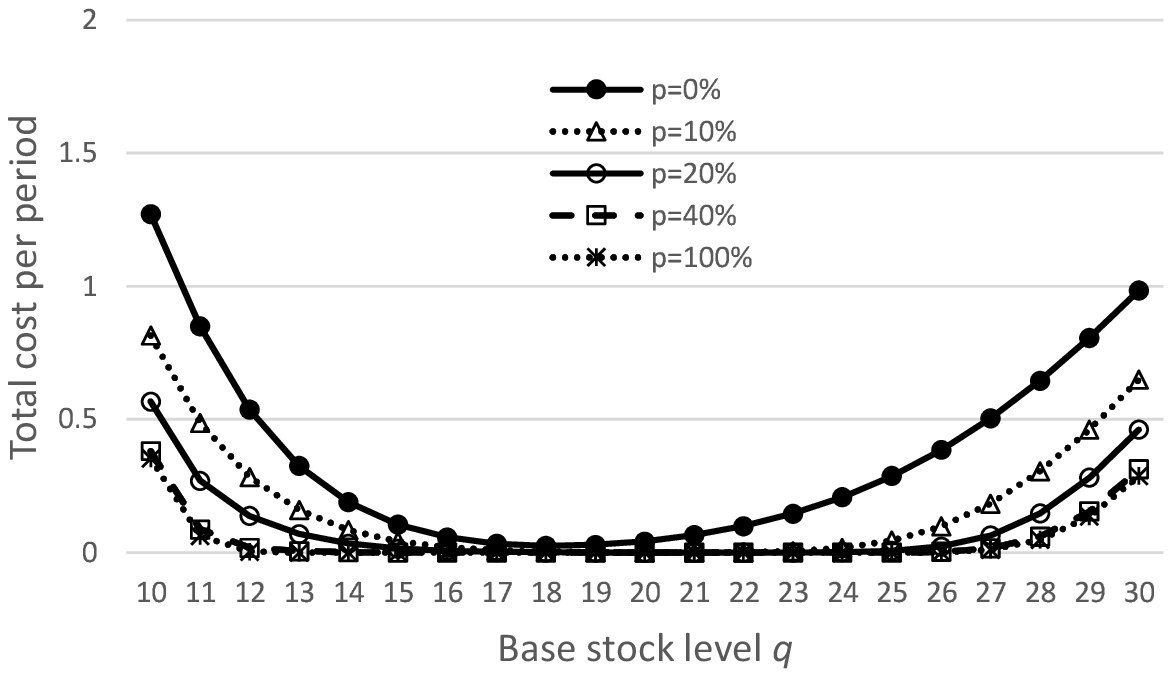}
        \caption{Total Cost ($m=3,r=1,\theta=1$)}
        \label{fig:m3ptotal}
    \end{subfigure}
\begin{flushleft}
\hspace{-0.2cm} \footnotesize \emph{Notes}: Plots in (a) and (b) are equivalent except for the range; we use $[10,20]$ for $m=2$ while we use $[10,30]$ for $m=3$.  Both $p=0\%$ lines in (e) and (f) correspond to the baseline case with no opaque scheme.
\end{flushleft}
\end{figure}

\textcolor{black}{Next, we consider some cases with~${r \neq \theta}$. We set the ratio between shortage and wastage costs to $r/\theta=2$ (Figures~\ref{fig:m2ptotalr2} and~\ref{fig:m3ptotalr2}) and $r/\theta=0.5$ (Figures~\ref{fig:m2ptotalth2} and~\ref{fig:m3ptotalth2}) and observe the plots of $\mathbb{E}[C]$. As expected, when $r/\theta>1$ ($r/\theta<1$), shortages (wastages, respectively) play more important role than $r/\theta=1$ case. We continue to observe the benefit of using the opaque scheme: As $p$ increases, $\mathbb{E}[C]$ goes down. Because the property of $\mathbb{E}[C]$ does not change qualitatively (although values change), we use $r=\theta=1$ in subsequent numerical experiments.}
\begin{figure}[H]
    \caption{The effect of $r/\theta$ on total cost for $c_v=0.32 \;(\lambda=10), n=12$.}
    \label{fig:randtheta}
    \centering
     \begin{subfigure}[]{0.48\textwidth}
     \centering
        \includegraphics[scale=0.44]{./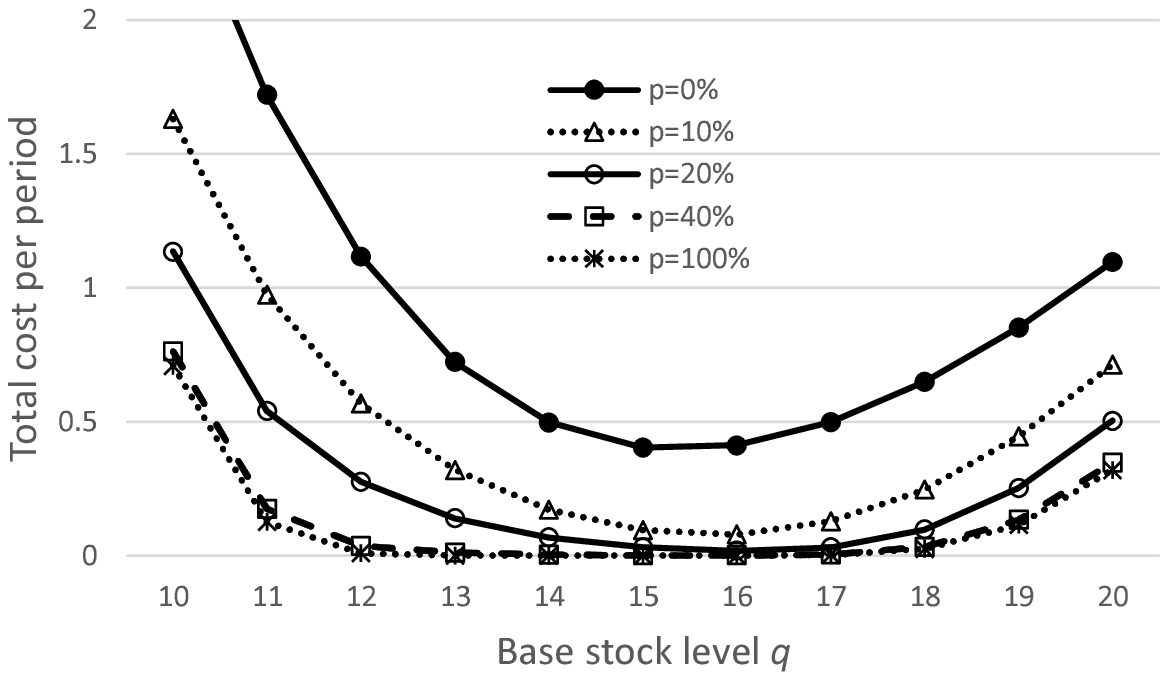}
        \caption{Total Cost ($m=2,r=2,\theta=1$)}
        \label{fig:m2ptotalr2}
   \end{subfigure}
     \begin{subfigure}[]{0.48\textwidth}
     \centering
        \includegraphics[scale=0.44]{./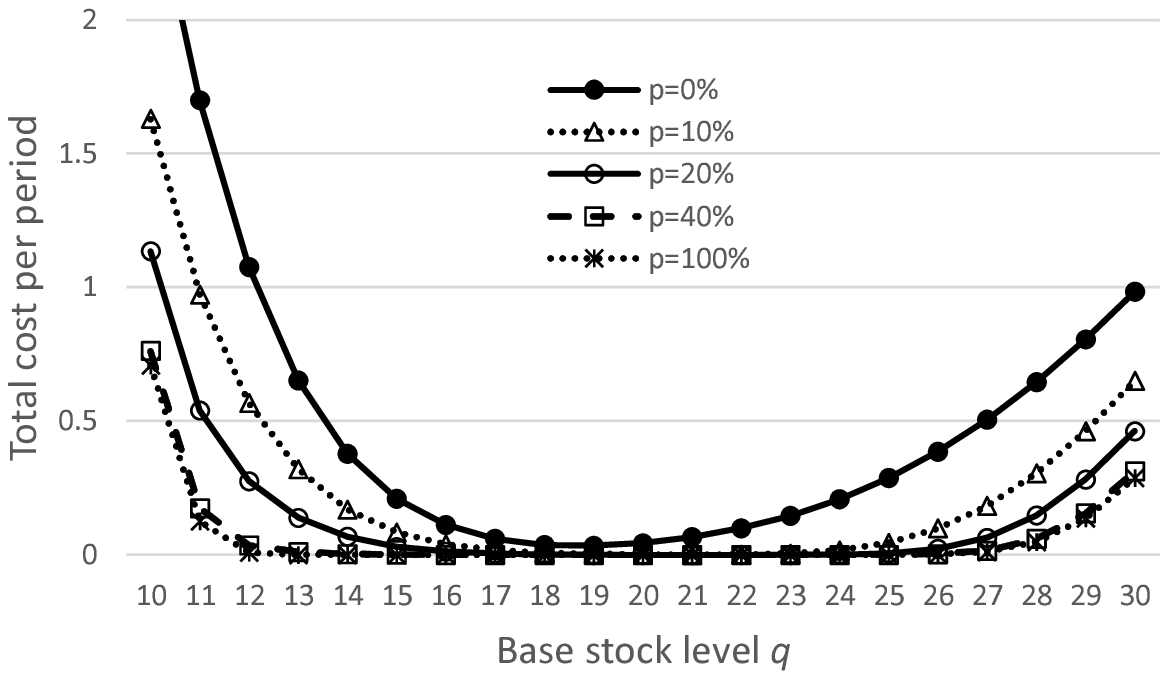}
        \caption{Total Cost ($m=3,r=2,\theta=1$)}
        \label{fig:m3ptotalr2}
    \end{subfigure}
         \begin{subfigure}[]{0.48\textwidth}
     \centering
        \includegraphics[scale=0.44]{./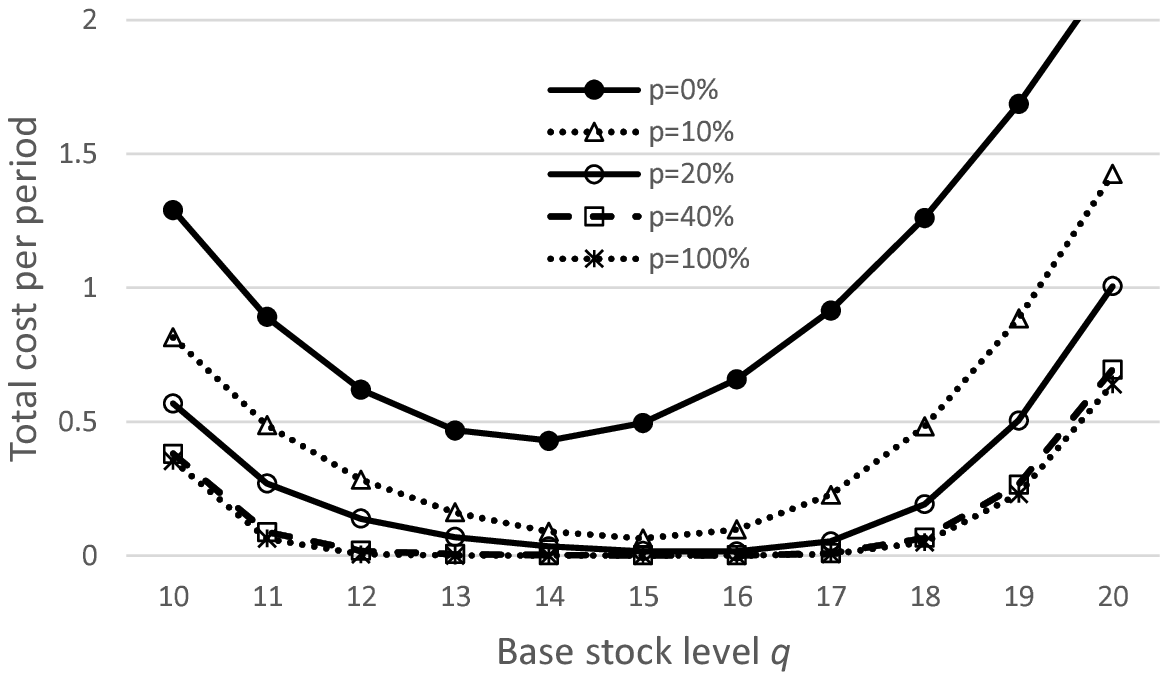}
        \caption{Total Cost ($m=2,r=1,\theta=2$)}
        \label{fig:m2ptotalth2}
   \end{subfigure}
     \begin{subfigure}[]{0.48\textwidth}
     \centering
        \includegraphics[scale=0.44]{./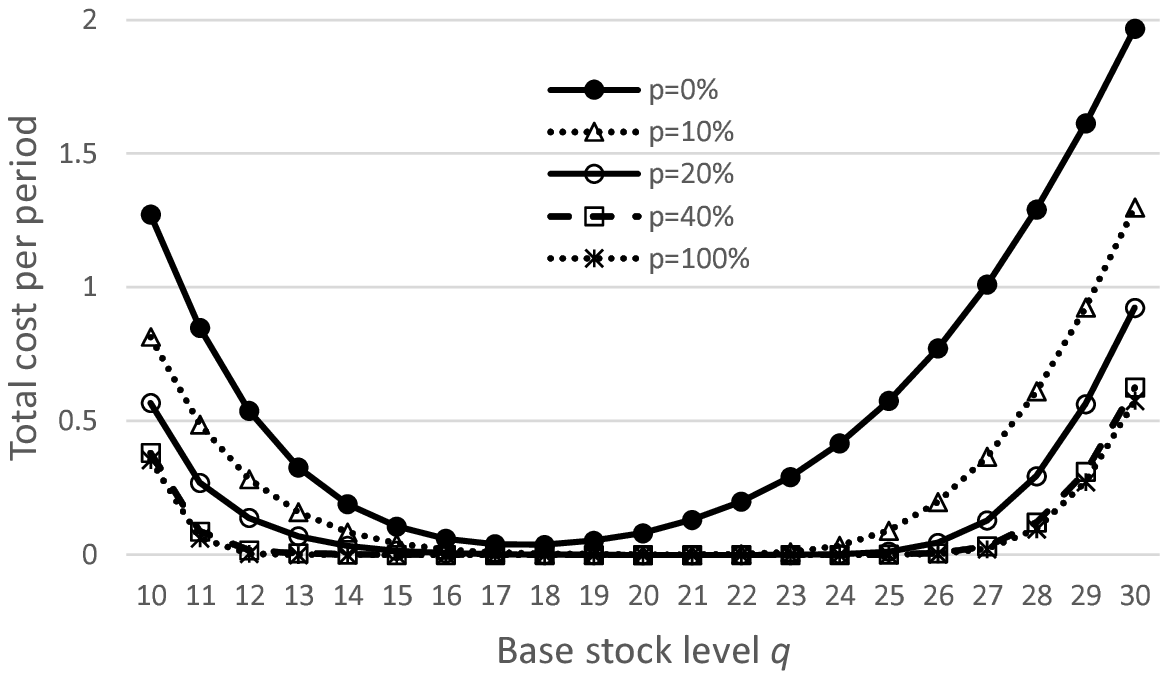}
        \caption{Total Cost ($m=3,r=1,\theta=2$)}
        \label{fig:m3ptotalth2}
    \end{subfigure}
\end{figure}

Following the same procedure as above, we vary $n$ while fixing $c_v=0.32$ ($\lambda=10$) and $p=1$, and obtain $\mathbb{E}[C]$ for $m=2$ in Figure~\ref{fig:m2ntotal} and for $m=3$ in Figure~\ref{fig:m3ntotal}. We observe similarities between Figures~\ref{fig:m2ptotal} and \ref{fig:m2ntotal} and between Figures~\ref{fig:m3ptotal} and \ref{fig:m3ntotal}. For example, $n=12, p=10\%$ lines in Figures~\ref{fig:m2ptotal} and \ref{fig:m3ptotal} closely resemble $n=2, p=100\%$ lines in Figures~\ref{fig:m2ntotal} and \ref{fig:m3ntotal}, respectively; $n=12, p=20\%$ and $n=4, p=100\%$ lines also look very similar to each other. These similarities are due to the proximity of variances: $\sigma_{12,0.1}^2\approx \sigma_{2,1}^2=\sigma^2/2$ and $\sigma_{12,0.2}^2\approx \sigma_{4,1}^2=\sigma^2/4$ hold according to Figure~\ref{fig:p-cv-b}. This observation implies that the value of $\sigma_{n,p}^2$ is a major determinant factor for $\mathbb{E}[C]$ under the opaque scheme. (Note: we observe this property more explicitly in Figure~\ref{fig:vartotalcost}.)
\begin{figure}[H]
    \caption{The effect of $n$ on total cost for $c_v=0.32 \;(\lambda=10), p=1$.}
    \label{fig:totalcost}
    \centering
        \begin{subfigure}[]{0.48\textwidth}
     \centering
        \includegraphics[scale=0.44]{./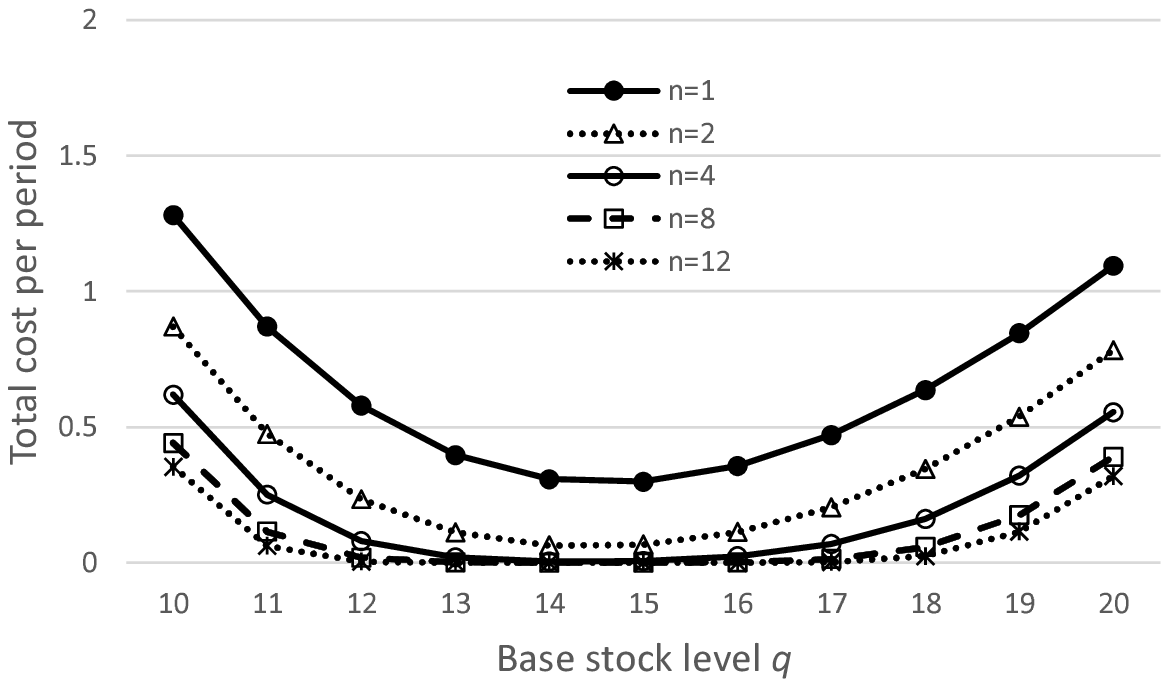}
        \caption{Total Cost ($m=2$)}
        \label{fig:m2ntotal}
   \end{subfigure}
     \begin{subfigure}[]{0.48\textwidth}
     \centering
        \includegraphics[scale=0.44]{./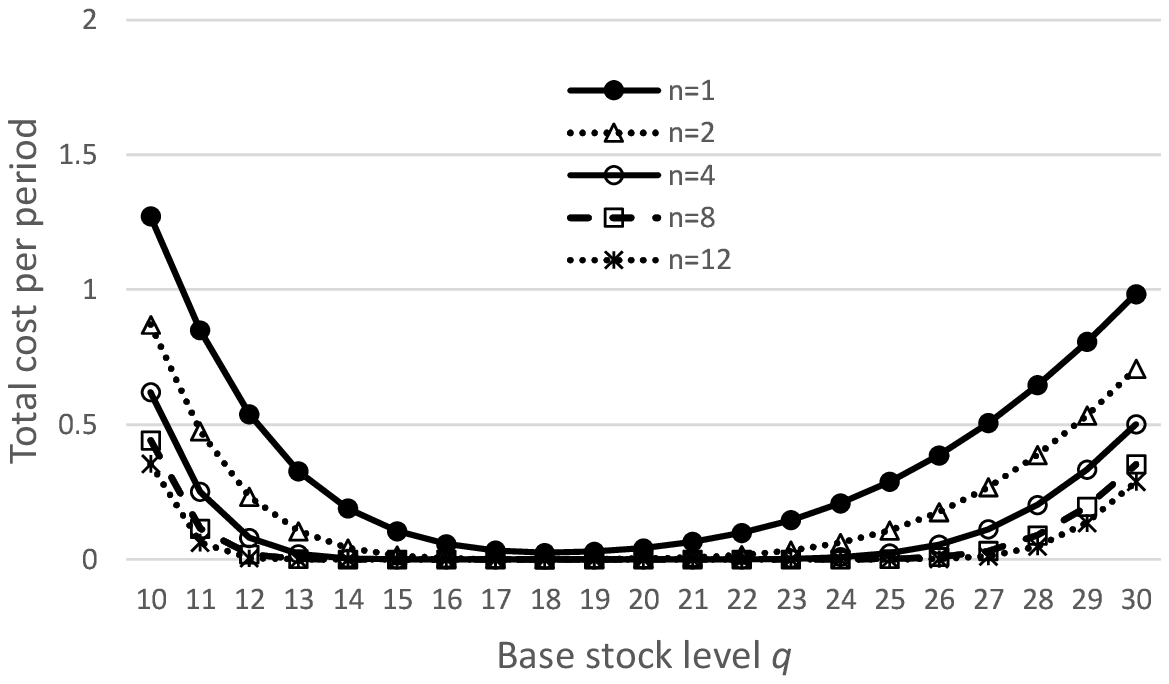}
        \caption{Total Cost ($m=3$)}
        \label{fig:m3ntotal}
    \end{subfigure}
\begin{flushleft}
\hspace{-.1cm} \footnotesize \emph{Notes}: This figure and all other figures below assume $r=\theta=1.$ Both $n=1$ lines in (a) and (b) correspond to the baseline case with no opaque scheme.
\end{flushleft}
\end{figure}

We observe a diminishing return property for the $(n,p)$ opaque scheme in Figures~\ref{fig:shortageholdingwastage} and \ref{fig:totalcost}: the marginal impact on $\mathbb{E}[C]$ is larger when $p$ or $n$ are smaller. For example, a change from $p=0\%$ to $10\%$ in Figure \ref{fig:m2ptotal} and a change from $n=1$ to 2 in Figure \ref{fig:m2ntotal} reduce $\mathbb{E}[C]$ most (compared to other changes such as a change from $p=10\%$ to $20\%$). This is also attributed to the dependence of $\mathbb{E}[C]$ on $\sigma_{n,p}^2$: We observe a larger marginal reduction of $\sigma_{n,p}^2$ when $p$ or $n$ are smaller according to Figure~\ref{fig:npr}.

%\begin{figure}
%    \caption{Comparison between $\mathbb{E}[C]$ and $\mathbb{E}[C_\text{LB}]$.}
%    \label{fig:lowerbound}
%    \centering
%    \begin{subfigure}[]{0.48\textwidth}
%     \centering
%        \includegraphics[scale=0.44]{./}
%        \caption{$m=2$ case}
%        \label{fig:m2approxwide}
%   \end{subfigure}
%     \begin{subfigure}[]{0.48\textwidth}
%     \centering
%        \includegraphics[scale=0.44]{./}
%        \caption{$m=3$ case}
%        \label{fig:m3approxwide}
%    \end{subfigure}
 %      \begin{subfigure}[]{0.48\textwidth}
%     \centering
%        \includegraphics[scale=0.44]{./}
%        \caption{$m=2$}
%        \label{fig:m2approxnarrow}
%   \end{subfigure}
%     \begin{subfigure}[]{0.48\textwidth}
%     \centering
%        \includegraphics[scale=0.44]{./}
%        \caption{$m=3$}
%        \label{fig:m3approxnarrow}
%    \end{subfigure}
%\begin{flushleft}
%\hspace{-0.15cm} \footnotesize \emph{Notes}: $c_v=0.32 \;(\lambda=10), p=100\%$. All dots represent $\mathbb{E}[C]$; all lines (LB; lower bound) represent %$\mathbb{E}[C_\text{LB}]$. $n=1$ corresponds to the baseline case with no opaque scheme.
%\end{flushleft}
%\end{figure}

\subsection{\textcolor{black}{Impact of $c_v$ of non-opaque products on the expected total cost $\mathbb{E}[C]$}}
\label{5-5}

\begin{figure}[H]
    \caption{Effect of $c_v$ on $\mathbb{E}[C]$ for $n=2$.}
    \label{fig:lam}
    \centering
    \begin{subfigure}[]{0.48\textwidth}
     \centering
        \includegraphics[scale=0.44]{./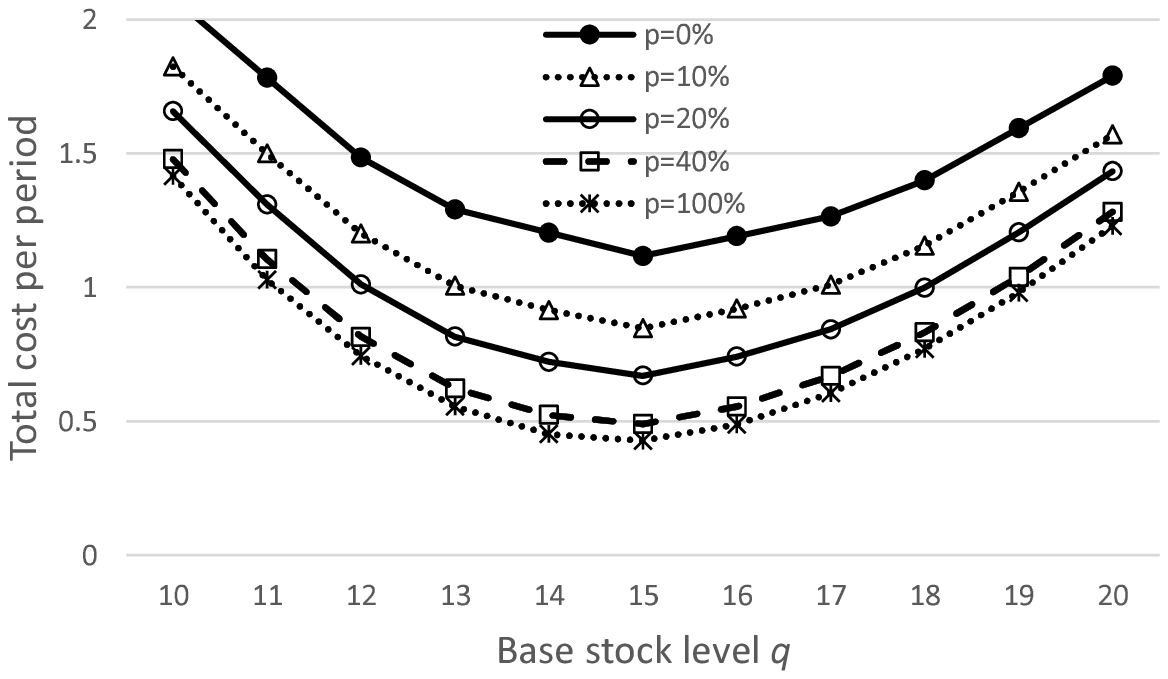}
        \caption{$m=2, c_v=0.50 \;(\lambda=4)$ case}
        \label{fig:lam4-a}
   \end{subfigure}
     \begin{subfigure}[]{0.48\textwidth}
     \centering
        \includegraphics[scale=0.44]{./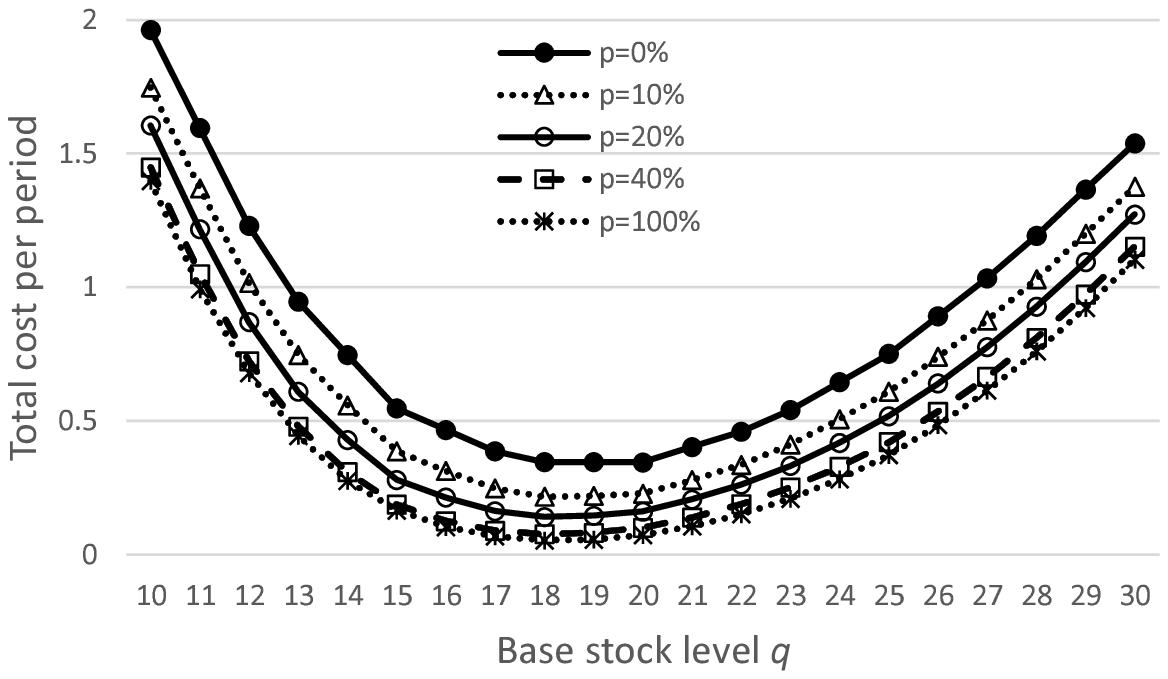}
        \caption{$m=3, c_v=0.50 \;(\lambda=4)$ case}
        \label{fig:lam4-b}
    \end{subfigure}
        \begin{subfigure}[]{0.48\textwidth}
     \centering
        \includegraphics[scale=0.44]{./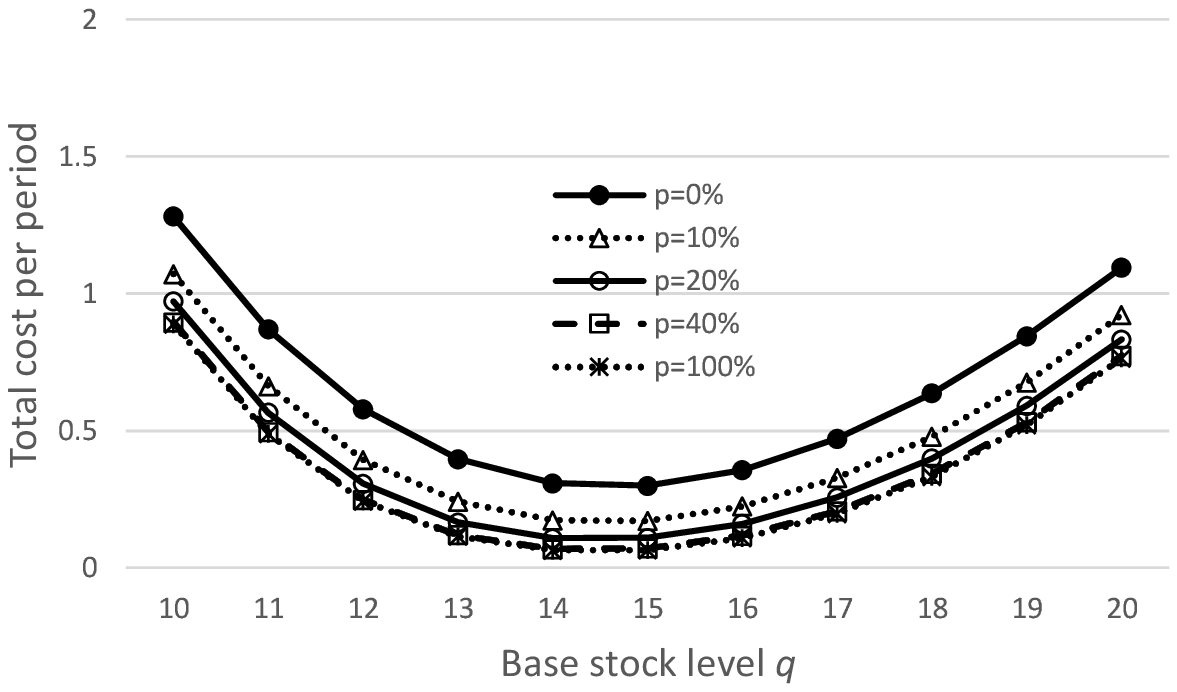}
        \caption{$m=2, c_v=0.32 \;(\lambda=10)$ case}
        \label{fig:lam10-a}
   \end{subfigure}
     \begin{subfigure}[]{0.48\textwidth}
     \centering
        \includegraphics[scale=0.44]{./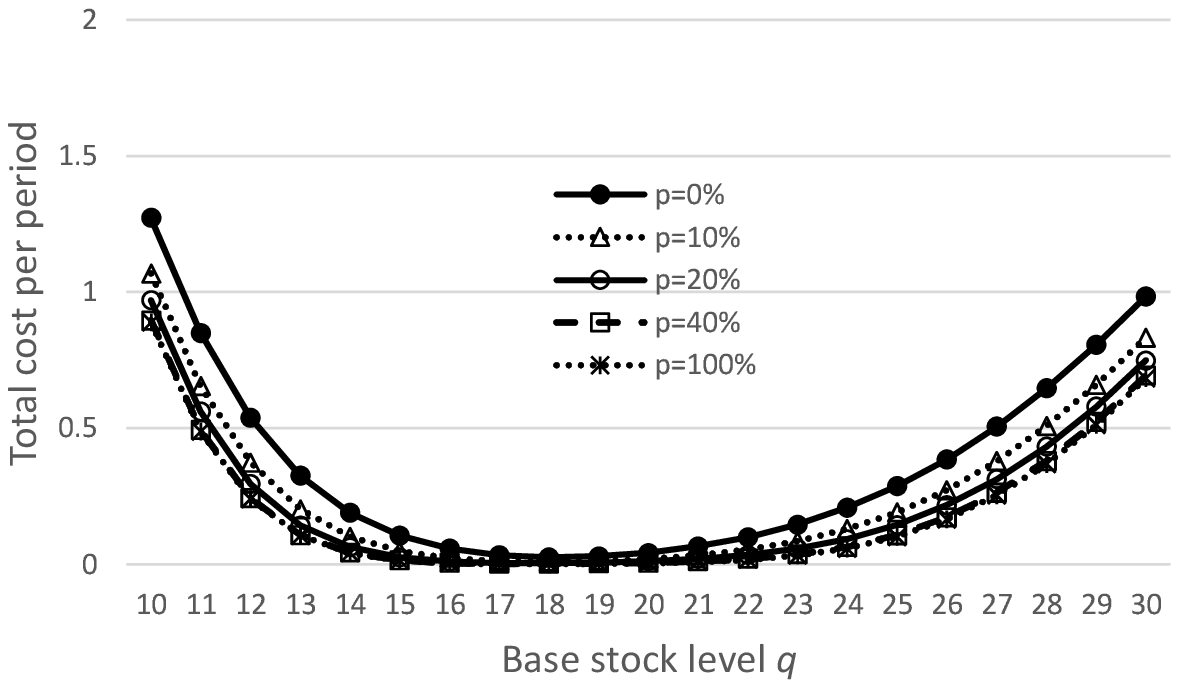}
        \caption{$m=3, c_v=0.32 \;(\lambda=10)$ case}
        \label{fig:lam10-b}
    \end{subfigure}
    \begin{subfigure}[]{0.48\textwidth}
     \centering
        \includegraphics[scale=0.44]{./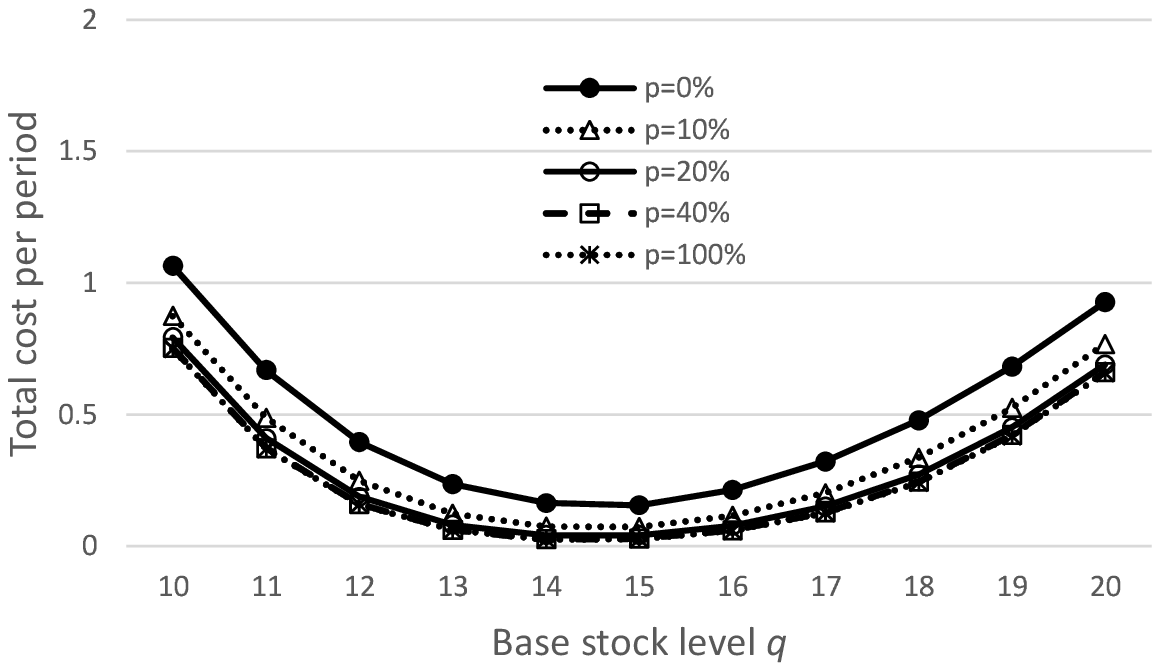}
        \caption{$m=2, c_v=0.27 \;(\lambda=14)$ case}
        \label{fig:lam14-a}
   \end{subfigure}
     \begin{subfigure}[]{0.48\textwidth}
     \centering
        \includegraphics[scale=0.44]{./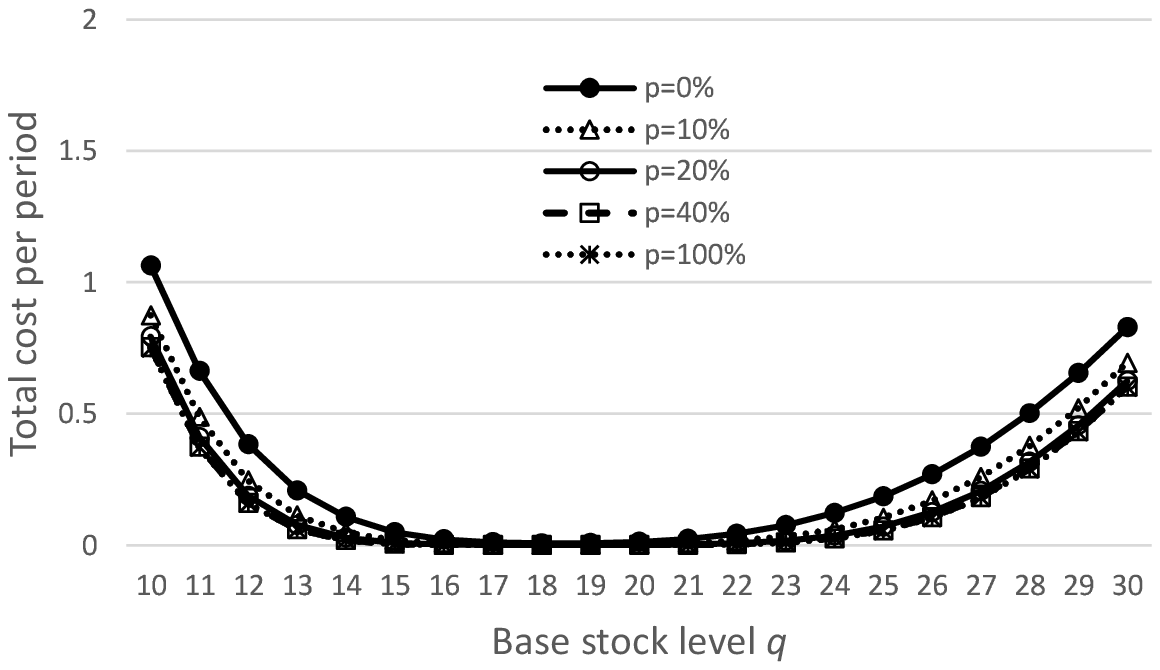}
        \caption{$m=3, c_v=0.27 \;(\lambda=14)$ case}
        \label{fig:lam14-b}
    \end{subfigure}
%\begin{flushleft}
%\hspace{1cm} \footnotesize \emph{Note}: $r=\theta=1, n=2$
%\end{flushleft}
\end{figure}
We next examine the impact of variability ($c_v$) of original demand $D_0$ on~$\mathbb{E}[C]$. In Figure \ref{fig:lam}, we use $c_v \in \{0.50, 0.32, 0.27\}$ for $n=2$ and compare their results. First, we observe that $\mathbb{E}[C]$ is closer to zero for smaller $c_v$ given $q$; this observation is attributed to the smaller $\sigma_{n,p}^2$ for smaller $c_v$ (see also Figure~\ref{fig:p-cv-b}). Second,~$\mathbb{E}[C]$ converges faster to its minimum for smaller $c_v$ as $p \rightarrow 1$; this observation is attributed to the increased speed of convergence of ${\sigma_{n,p}^2}$ to its minimum $\sigma_{n,1}^2$ for smaller $c_v$ (see also Figure~\ref{fig:p-cv-b2}). These results imply that the~$(n,p)$ opaque scheme impacts ${\mathbb{E}[C]}$ mainly through the change in the variance~${\sigma_{n,p}^2}$ of $D_p$

\begin{figure}[H]
    \caption{Effect of $\sigma_{n,p}^2$ on $\mathbb{E}[C]$ for $c_v=0.32 \;(\lambda=10)$.}
    \label{fig:vartotalcost}
    \centering
    \begin{subfigure}[]{0.48\textwidth}
     \centering
        \includegraphics[scale=0.44]{./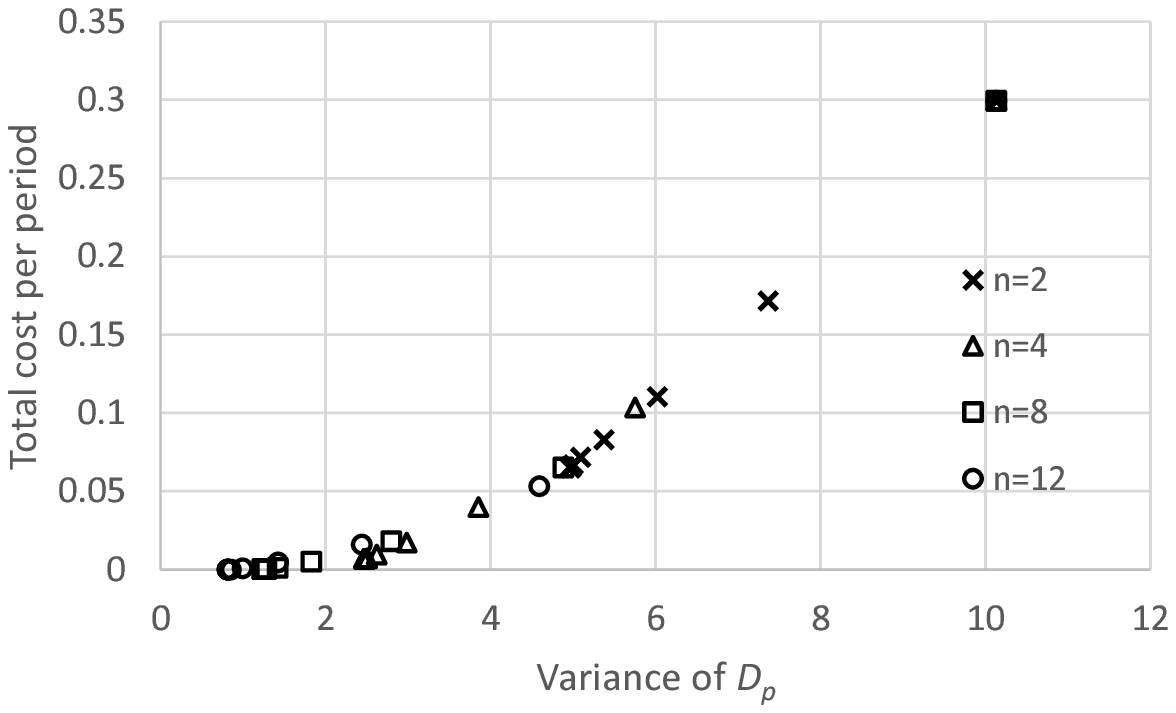}
        \caption{$m=2, q=15$ (optimal $q$)}
        \label{fig:m2q15varimpact}
   \end{subfigure}
        \begin{subfigure}[]{0.48\textwidth}
     \centering
        \includegraphics[scale=0.44]{./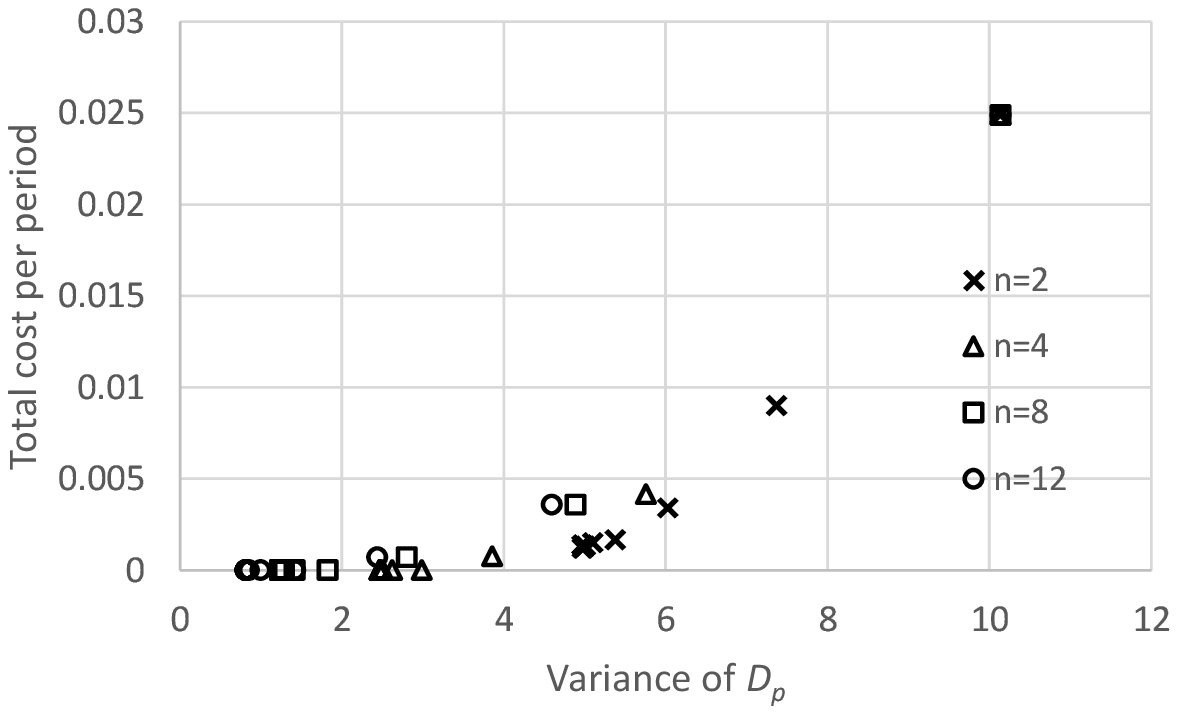}
        \caption{$m=3, q=18$ (optimal $q$)}
        \label{fig:m3q18varimpact}
   \end{subfigure}
    \begin{subfigure}[]{0.48\textwidth}
     \centering
        \includegraphics[scale=0.44]{./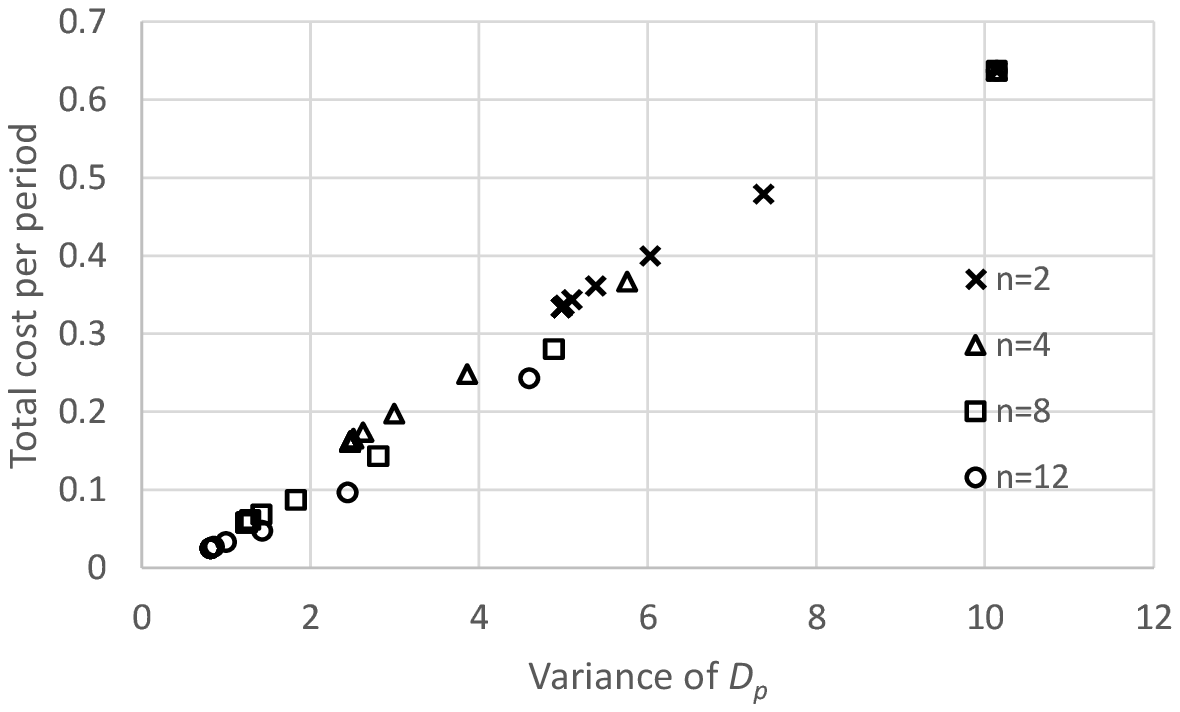}
        \caption{$m=2, q=18$ (20\% above its optimum)}
        \label{fig:m2q18varimpact}
   \end{subfigure}
    \begin{subfigure}[]{0.48\textwidth}
     \centering
        \includegraphics[scale=0.44]{./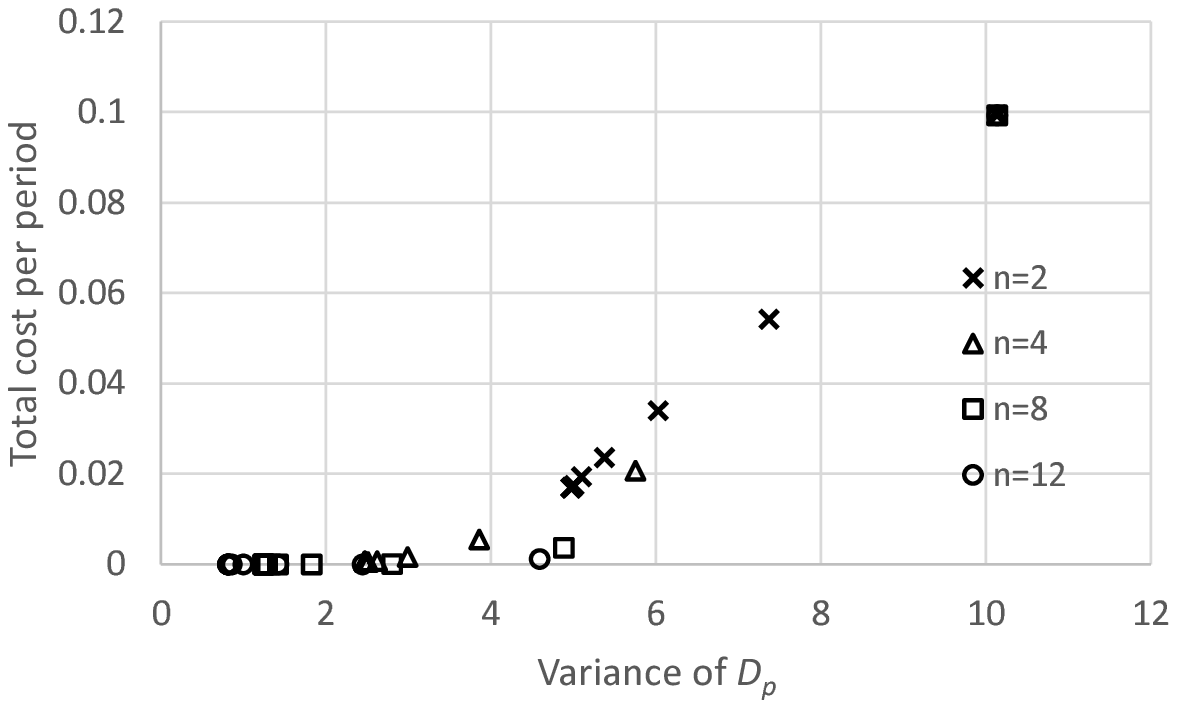}
        \caption{$m=3, q=22$ (22\% above its optimum)}
        \label{fig:m3q22varimpact}
   \end{subfigure}
\begin{flushleft}
\hspace{-.1cm} \footnotesize \emph{Notes}: Eleven plots corresponding to $p=0\%$ to $100\%$ for each $n$ are shown in each figure. Solid square dots on the top right corner of all figures correspond to the $p=0\%$ case for all $n$ (baseline case with no opaque scheme).
\end{flushleft}
\end{figure}

\subsection{\textcolor{black}{Impact of variance $\sigma_{n,p}^2$ on the expected total cost $\mathbb{E}[C]$}}
\label{5-6}

So far, our experiments indicate a strong dependence of $\mathbb{E}[C]$ on $\sigma_{n,p}^2$. To observe the relationship between $\sigma_{n,p}^2$ and $\mathbb{E}[C]$ more explicitly, we plot them in Figure~\ref{fig:vartotalcost}. In this figure, we fix $q$ around its optimal base-stock level and vary ${n\in \{2,4,8,12\}}$ and ${p\in\{0,0.1,0.2,\cdots,1\}}$ for either $m=2$ or $m=3$ cases. Figure~\ref{fig:vartotalcost} shows that $\mathbb{E}[C]$ is approximately linearly dependent on $\sigma_{n,p}^2$ irrespective of~${(n,p)}$ except when $\mathbb{E}[C]\approx 0$. This result indicates that the reduction of $\mathbb{E}[C]$ is primarily driven by the reduction of $\sigma_{n,p}^2$ until $\mathbb{E}[C]$ becomes close to zero; once we achieve $\mathbb{E}[C]\approx 0$, the further reduction of $\sigma_{n,p}^2$ does not impact $\mathbb{E}[C]$. The threshold values of $\sigma_{n,p}^2$ that achieve $\mathbb{E}[C]\approx 0$ would provide a useful tip for practitioners: it is not worthwhile to reduce ${\sigma_{n,p}^2}$ beyond the threshold since the maximum cost saving is already achieved. However, the exact threshold is hard to obtain since it depends on both $n$ and $p$; alternatively, we can approximate it by $\sigma^2_{\text{th}}$, which is easily determined by Equation~\eqref{eq:threshold} (see Definition~\ref{threshold}).

%We define the threshold variance as $\sigma^2_{\text{th}}=\max\{\sigma^2_{n,1}\:|\:\overline{C}_\text{LB}\leq \delta\}$ for sufficiently small $\delta$. This threshold variance~${\sigma_{\text{th}}^2}$ provides a useful tip for practitioners: it is not worthwhile to reduce ${\sigma_{n,p}^2}$ beyond $\sigma_{\text{th}}^2$.
%\subsection{\textcolor{black}{Finding the threshold variance $\sigma_{th}^2$ that achieves close to zero expected total cost $\mathbb{E}[C]$}}
%\label{5-7}

Table~\ref{tab:lowerbound} shows the total cost $\mathbb{E}[C]$ and its theoretical lower and upper bounds $\overline{C}_\text{LB}$ and $m\overline{C}_\text{LB}$, respectively, as implied by Proposition~\ref{wastage}. We evaluate $\sigma^2_{\text{th}}$ using sufficiently small $\delta$ ($\delta=0.01$). By observing the values of $\overline{C}_\text{LB}$ in Table~\ref{tab:lowerbound}, it is straightforward to identify the smallest $n$ and the largest $\sigma^2_{n,1}$ that satisfy $\overline{C}_\text{LB}\le \delta$ (and thus ${\mathbb{E}[C] \le m\overline{C}_\text{LB}\le m\delta}$): We obtain $\sigma^2_{\text{th}}=2.5$ at $q=15$ and $\sigma^2_{\text{th}}<0.83$ at $q=18$ for $m=2$; and $\sigma^2_{\text{th}}=5$ at both $q=18$ and 22 for $m=3$. These rough estimates of $\sigma^2_{\text{th}}$ are consistent with the thresholds observed in Figure~\ref{fig:vartotalcost}.
\begin{table}[H]
\begin{center}
\caption{Comparison between $\mathbb{E}[C]$ and $\overline{C}_\text{LB}$.}
\footnotesize
\begin{tabular}{ccc|rrrrr}
\noalign{\smallskip} \hline
&    & $n$   & 1           & 2          & 4          & 8         & 12 \\ \cline{3-8}
 $m$ &  $q$    & $\sigma^2_{n,1}=10/n$   & 10           & 5          & 2.5          & 1.25         & 0.83 \\ \hline
$2$ & \multicolumn{1}{r}{15} & $\mathbb{E}[C]$ & 0.2993 & 0.0673 & 0.0067 & 0.0002 & 0.0000 \\
&  & $\overline{C}_\text{LB}$ & 0.2287 & 0.0465 & \underline{0.0042} & 0.0001 & 0.0000 \\
&  & $2\overline{C}_\text{LB}$ & 0.4574 & 0.0930 & 0.0084 & 0.0002 & 0.0000 \\ \cdashline{2-8}
   & \multicolumn{1}{r}{18}   & $\mathbb{E}[C]$ & 0.6365 & 0.3455 & 0.1610 & 0.0577 & 0.0249 \\
&  & $\overline{C}_\text{LB}$ & 0.4759 & 0.2434 & 0.1080 & 0.0363 & 0.0156 \\
&  & $2\overline{C}_\text{LB}$ & 0.9519 & 0.4868 & 0.2161 & 0.0726 & 0.0311 \\ \hline
$3$ & \multicolumn{1}{r}{18} & $\mathbb{E}[C]$ & 0.0249 & 0.0006 & 0.0000 & 0.0000 & 0.0000 \\
&  & $\overline{C}_\text{LB}$ & 0.0183 & \underline{0.0005} & 0.0000 & 0.0000 & 0.0000 \\
&  & $3\overline{C}_\text{LB}$ & 0.0549 & 0.0016 & 0.0000 & 0.0000 & 0.0000 \\ \cdashline{2-8}
   & \multicolumn{1}{r}{22}   & $\mathbb{E}[C]$ & 0.0993 & 0.0166 & 0.0006 & 0.0000 & 0.0000 \\
&  & $\overline{C}_\text{LB}$ & 0.0469 & \underline{0.0065} & 0.0003 & 0.0000 & 0.0000 \\
&  & $3\overline{C}_\text{LB}$ & 0.1407 & 0.0194 & 0.0008 & 0.0000 & 0.0000  \\\hline
\end{tabular}
\label{tab:lowerbound}
\begin{flushleft}
\hspace{-0.1cm} \footnotesize \emph{Notes}: $c_v=0.32 \;(\lambda=10), p=1$. $q=15$ is optimal when $m=2$. $q=18$ is optimal when $m=3$. The underlined values specify that $\overline{C}_\text{LB}$ is the largest value that goes below ${\delta=0.01}$.
\end{flushleft}
\end{center}
\normalsize
\end{table}

\section{\textcolor{black}{Discussion and managerial insights}}
\label{sec6}
\textcolor{black}{In this section, we discuss various issues and managerial insights of our opaque scheme in detail. For readers' convenience, we list key questions one by one below.}

\subsection{\textcolor{black}{Benefit of our opaque scheme}}
\textcolor{black}{Our opaque scheme aims to provide a new method to help retailers better manage their inventory systems when suffering from large shortages and wastages. Our opaque scheme using BPD is effective for perishables: It reduces the variability of product demands, reduces shortages and wastages of perishables, and achieves a lower total cost (a higher cost saving). We provide rules of thumb to determine necessary parameters for the implementation of our opaque scheme. Customers also benefit from our opaque scheme. An opaque item for customers is simply an added option; if they do not want to purchase an opaque product, they do not need to. However, if they do purchase an opaque product, they may benefit from a discount or reward such as free/reduced shipping fee. Finally, and most importantly, the society would benefit from the opaque scheme for a potentially large aggregate reduction of food wastages. The opaque scheme, if implemented by many retail stores, could help the society to contribute to the Sustainable Development Goals (SDG) Target 12.3: ``By 2030, halve per capita global food waste at the retail and consumer levels and reduce food losses along production and supply chains, including post-harvest losses'' \cite{goals2021goal}.}

\subsection{\textcolor{black}{Recommendations for managers}}
\textcolor{black}{When product demands are highly variable, managers could first check the threshold number of non-opaque products (Definition~\eqref{threshold}) needed to achieve the maximum total cost saving. Assuming that the number of non-opaque products available for them is less than this threshold, retail store managers may next refer to the approximate relative variance (Corollary~\ref{relativevariance}) to obtain the approximate $p$ necessary to achieve most of the benefit provided by the opaque scheme. For example, we show in Figure~\ref{fig:cv} that 80\% of the benefit of the opaque scheme is achieved by setting $p=0.6 c_v$, or 90\% of the benefit of opaque scheme is achieved by setting $p=0.8 c_v$, where the benefit is defined as the reduction of variance, leading to the expected total cost savings. Since $c_v$ is often much less than 1 (for example, $c_v$ ranges from 0.1 to 0.5 in \cite{nandakumar1993near}), a smaller $p$ is sufficient to capture most of the benefit of the opaque scheme.}

\subsection{\textcolor{black}{Ease of implementation in practice}}
\textcolor{black}{The key factor that needs to be warranted is the anonymity of an opaque product: its specification must be opaque. Thus, a traditional in-person sales (where customers can easily see what's sold as an opaque product) is less suitable for an opaque selling scheme (if not impossible). This situation has drastically changed due to the rapid rise of mobile apps (E-commerce via internet enabled phones). Customers make orders and complete payments through their mobile apps from within the store or outside of the store. After orders are made via mobile apps, customers pick up ordered items (including opaque products) from staff without going through a cash register. This scheme is gaining popularity since it saves time for both customers and staff members.  An opaque sales can be implemented with almost no extra cost if stores already adopt mobile apps.}
%For example, a bagel store staff receiving an order for \emph{opaque} bagels via their mobile app can provide the customer bagels chosen by the staff (i.e., ones that are less sold on that day). This simple scheme could reduce the variability of product demands, prevent the store from overstocking bagels, leading to less wastages at the end of business hours. Note that not only the implementation of the opaque scheme but also the control of its level is easy; if store managers recognize that the demands of non-opaque products are already balanced, they can simply stop selling an opaque product or reduce the amount of discount/reward for an opaque product.
\subsection{\textcolor{black}{Obstacles for implementation}}
\textcolor{black}{Although our BPD opaque scheme is simple and easy to implement in most cases, there are some cases that make the implementation of BPD challenging. First, we require some portion of customers to be indifferent among non-opaque products. If all customers have a strong preference to purchase a specific (non-opaque) product or if only a few switch from non-opaque to opaque products when the discount for an opaque product is huge, opaque schemes do not work well. The second, possibly more serious, obstacle for the opaque scheme is the positive correlation among all non-opaque product demands. If strong positive correlations exist among non-opaque product demands, an opaque demand (pooled demand) cannot effectively reduce the variability of non-opaque product demands. This same issue is called systematic risk (or market risk) in the insurance and finance fields; a risk pooling scheme does not work in this situation.}

%For example, blue and yellow umbrellas cannot make a good combination for the opaque scheme because both sales are likely to go up (go down) when it is rainy (sunny, respectively.) The key to the success of the opaque scheme is to select non-opaque products that have less systematic risk.
\subsection{\textcolor{black}{Limitations of our opaque scheme}}
\label{limitation}
\textcolor{black}{The current study has many simplifying assumptions: scaled Poisson non-opaque product demands, i.i.d. non-opaque product demands, an constant probability to shift from each non-opaque product to an opaque product, and a constant total customer demand before/after the implementation of the opaque scheme. The inventory model we consider is also restricted. We only consider the zero lead time case following the opaque scheme literature (zero lead time is assumed ``due to the notorious complexity of multi-item inventory management'' \cite{elmachtoub2019value}). In fact, the joint impact of product demand variability and positive lead time on the total cost is difficult to incorporate in the analysis of opaque schemes. We also assume a fixed total holding cost, which includes a negligible variable (per-unit) holding cost compared to shortage and wastage costs; this assumption is reasonable when a facility cost (including utility costs such as electricity and heating costs) takes a large portion of the total holding cost. Finally, we simplify the analysis in this paper by limiting our discussion only on BPD; however, in practice, a hybrid of BPD and BPI may be a better alternative. In fact, many extensions to our model are necessary to further investigate the effectiveness of opaque schemes. We will address these extensions in the future study.}
%Although we expect that opaque schemes are effective for the cases of positive lead time and non-negligible variable holding cost, these extensions would make accurate analysis of opaque schemes difficult and complicated.

\section{Conclusions}
\label{sec7}
\textcolor{black}{In this paper we studied the effectiveness of an opaque scheme in perishable inventory systems. Using our $(n,p)$ opaque scheme following the balancing policy on demand (BPD), we showed that store managers could reduce the variability of orders for perishable products they sell, therein reducing the expected total cost caused by shortages and wastages. Our analysis has also revealed the insight into the origins of the effectiveness of our opaque scheme. Previously, only numerical experiments showed that an opaque scheme requires only a smaller proportion of customers to switch from purchasing a non-opaque product to an opaque product \cite{elmachtoub2019value}. However, our analysis (specifically, Equation~\eqref{sigma-rel}) revealed that the necessary proportion can be small as long as the coefficient of variation of product demands is small. We confirmed the accuracy of the formula in numerical experiments. We also provided a convenient indicator, the threshold variance, that achieves the maximum total cost savings from the opaque scheme; this information helps practitioners to decide whether they want to implement our opaque scheme, and if so, which combination of parameters should be used.}
%As a rule of thumb, 80\% (90\%) of the benefit of the opaque scheme is achieved by setting $p=0.6c_v$ ($p=0.8c_v$, respectively). 
%We demonstrate that the opaque scheme following the balancing policy on demand (BPD) requires only a smaller portion of customers to switch from purchasing a non-opaque product to an opaque product in order to reduce shortage and wastage. This property makes our opaque scheme a cost effective inventory strategy for any merchants that sell perishable products.

\textcolor{black}{This study shows that our opaque scheme can effectively reduce the variability of non-opaque product demands if implemented properly, leading to smaller shortages and wastages in perishable inventory systems. Moreover, due to many simplifying assumptions we make, we obtain simple analytical formulas and rules of thumb for practitioners who are considering implementing our opaque scheme. However, it requires further investigation to accommodate real-world complexities. With this study of an opaque scheme for perishable inventory systems, we hope to contribute to the reduction of food wastage, one of the most important sustainable development goals, as well as intrigue the readers into investigating this promising idea further.}
%%
%\theendnotes
% Acknowledgments here
%\section*{Acknowledgements} %The first author acknowledges the generous support from Tohoku University. The first author thanks James Dilger and Jasim Ahmed for their helpful comments and suggestions throughout the study. The authors dedicate this work to the memory of their friend and colleague, Professor Fukuju Yamazaki, who provided valuable insights into the study of opaque schemes.

%\section*{References}
%\bibliographystyle{ormsv080} % outcomment this and next line in Case 1
\bibliography{abandonment-ref3} % if more than one, comma separated
\newpage
\appendix
\appendixtitleon
\begin{appendices}{\textbf{\large Appendix}}
\setcounter{table}{0} %These two sentences are added to make the table number as A1
\renewcommand{\thetable}{A\arabic{table}} %These two sentences are added to make the table number as A1
\section{Notation}
\label{notation}
Table~\ref{tab:notation} shows the notation used in our study.
\begin{table}[H]
\begin{center}
\caption{Summary of notations}
\small
\begin{tabular}{c|l}
\noalign{\smallskip} \hline
 Notation  & \hspace{10 mm} Description        \\ \hline % \rule{0pt}{4ex}  
$n$ & number of non-opaque products  \\ %\rule{0pt}{4ex}  
$i$ & index for non-opaque product $i \in [1,n]$  \\
$p_i$ & probability to switch from non-opaque product $i$ to opaque product  \\
$p$ & $p_i$ when all $p_i$ are identical  \\
$\boldsymbol{p}$ & probability vector representing all $p_i$  \\
$D_0^i$ & original demand for non-opaque product $i$ per period\\
$D_0$ & $D_0^i$ when all $D_0^i$ are i.i.d.\\
$\lambda$ & base Poisson rate for $D_0$\\
$\mu^i$ & average original demand for non-opaque product $i$ (expected value of $D_0^i$) \\
$\mu$ & $\mu^i$ when all $\mu^i$ are identical\\
$X_p^i$ & intermediate demand for non-opaque product $i$  per period \\
$X_p^0$ & intermediate demand for opaque product per period\\
$D_p^i$ & adjusted demand for non-opaque product $i$ per period \\
$D_p$ & $D_p^i$ when all $D_0^i$ are i.i.d.\\
$\sigma^2$ & average variance of original demands $D_0^i, \forall i$ \\
$\sigma^2_{n,p}$ & average variance of adjusted demands $D_p^i, \forall i$ \\
$\sigma_{rel}^2(p)$ & relative variance\\
$E[S]$ & expected number of shortages per period\\
$E[W]$ & expected number of wastages per period\\
$r$ & shortage cost per item (adjusted by the item cost) \\
$\theta$ & wastage cost per item (adjusted by the item cost)\\
$E[C]$ & expected total system cost per period\\
$F_{Y}(\cdot)$ & cumulative distribution function (CDF)\\
$P_{Y}(\cdot)$ & probability mass function (PMF)\\ \hline
\end{tabular}
\label{tab:notation}
%\begin{flushleft}
%\hspace{-0.1cm} \footnotesize \emph{Notes}: $c_v=0.32 \;(\lambda=10), p=100\%$. $q=15$ is optimal when $m=2$. $q=18$ is optimal when $m=3$.
%\end{flushleft}
\end{center}
\normalsize
\end{table}

\section{Proofs}
\label{proofs}
%\begin{repeattheorem}[Theorem 1.]
%\label{marg-butt-th}
%In a reverse dictionary, $(\mbox{\bi marg}\succ\mbox{\bi butt\/}\; \land\;  
%\mbox{\bi arine}\succ\mbox{\bi er})$.
%Moreover, continuous reading of a compact subset of the dictionary
%attains the minimum of patience at the moment of giving up.
%\end{repeattheorem}

\begin{proof}[Proof of Lemma \ref{bound}.]
The first inequality, $\sigma^2 \ge {\sigma_{n,{\boldsymbol{p}}}^2}$, holds as the opaque scheme is only used to reduce ${\sigma_{n,{\boldsymbol{p}}}^2}$; the equality holds when $\boldsymbol{p}=\boldsymbol{0}$. The second inequality, $ {\sigma_{n,{\boldsymbol{p}}}^2}\ge \frac{\sigma^2}{n}$, holds since
\small
\begin{equation*}
\begin{aligned}
{\sigma_{n,{\boldsymbol{p}}}^2}&=\frac{1}{n}\sum_i var(D_{\boldsymbol{p}}^i)=\frac{1}{n}\sum_i \mathbb{E}[(D_{\boldsymbol{p}}^i-\mu^i)^2]=\mathbb{E}\left[\frac{1}{n}\sum_i \left(D_{\boldsymbol{p}}^i-\mu^i\right)^2\right] \\
&=\mathbb{E}\left[\left(\frac{1}{n}\sum_i (D_{\boldsymbol{p}}^i-\mu^i)\right)^2 +\frac{1}{n^2}\mathop{\sum \sum}_{i<j} \left((D^i_p-\mu^i)-(D^j_p-\mu^j)\right)^2 \right] \\
&\ge \mathbb{E}\left[\left(\frac{1}{n}\sum_i (D_{\boldsymbol{p}}^i-\mu^i)\right)^2\right]  =\mathbb{E}\left[\left(\frac{1}{n}\sum_i D_{\boldsymbol{p}}^i-\frac{1}{n}\sum_i \mu^i\right)^2 \right] \\
&=\mathbb{E}\left[\left(\frac{1}{n}\sum_i D_0^i-\frac{1}{n}\sum_i \mu^i\right)^2 \right]=var\left(\frac{1}{n} \sum_i D_0^i\right)=\frac{\sigma^2}{n},
\end{aligned}
\end{equation*}
\normalsize
where an equality holds when $D^i_p-\mu_i=D^j_p-\mu_j$ for all $i$ and $j$, which is realized when $\boldsymbol{p}=\boldsymbol{1}$.
\end{proof}

\begin{proof}[Proof of Proposition \ref{varepsilon}.]
The following identity relationship exists when $n=2$: $X_p^1+X_p^2+X^0_p=D_p^1+D_p^2=D_0^1+D_0^2$, where $X_p^1, X_p^2, X^0_p$ are independent scaled Poisson random variables, and $D_0^1$ and $D_0^2$ are independent scaled Poisson random variables. Thus, $\sigma_T^2=var(D_p^1+D_p^2)=var(D_0^1\pm D_0^2)$. Since $D_p^1=D_p^2$ when $p=1$, we have
\small
\begin{equation*}
\begin{aligned}
\sigma_{n,p}^2&=\frac{var(D_p^1)+var(D_p^2)}{2}=\frac{var(D_p^1+D_p^2)}{4}+\frac{var(D_p^1-D_p^2)}{4}=\frac{\sigma_T^2}{4}+\frac{var(D_p^1-D_p^2)}{4},\\
\sigma_{n,0}^2&=\frac{\sigma_T^2}{2},\; \sigma_{n,1}^2=\frac{\sigma_T^2}{4},\;
\sigma_{rel}^2=\frac{\sigma_{n,p}^2-\sigma_{n,1}^2}{\sigma_{n,0}^2-\sigma_{n,1}^2}=\frac{var(D_p^1-D_p^2)}{\sigma_T^2}.
\end{aligned}
\end{equation*}
\normalsize
Note that $D_p^1$ and $D_p^2$ are not independent except when $p=0$. Thus, to find $var(D_p^1-D_p^2)$, we separate the entire state space into three disjoint sets:
\small
\begin{equation*}
\begin{aligned}
A_1&=\{X_p^1, X_p^2, X^0_p|\:X_p^1 - X_p^2 > X^0_p\}=\{X_p^1, X_p^2, X^0_p|\:T>0\}\\
A_2&=\{X_p^1, X_p^2, X^0_p|\: |X_p^1 - X_p^2|\le X^0_p\},\;
A_3=\{X_p^1, X_p^2, X^0_p|\:X_p^1 - X_p^2 < -X^0_p\}
\end{aligned}
\end{equation*}
%\begin{eqnarray*}
%A_1&=&\{X_p^1, X_p^2, Z|\:X_p^1 - X_p^2 > Z_p\}=\{X_p^1, X_p^2, Z|\:T>0\}\\
%A_2&=&\{X_p^1, X_p^2, Z|\: |X_p^1 - X_p^2|\le Z_p\},\;
%A_3=\{X_p^1, X_p^2, Z|\:X_p^1 - X_p^2 < -Z_p\}
%\end{eqnarray*}
\normalsize
Here, observe that $A_1$ and $A_3$ are identical when $X_p^1$ and $X_p^2$ are flipped, and also that ${D_p^1-D_p^2=T}$ given $A_1$ and ${D_p^1-D_p^2=0}$ given $A_2$. Therefore we have
\small
\begin{equation*}
\begin{aligned}
var(D_p^1-D_p^2|A_1)&=var(D_p^1-D_p^2|A_3)=var(T|T>0),\; var(D_p^1-D_p^2|A_2)=0,\\
\mathbb{E}[D_p^1-D_p^2|A_1]&=-\mathbb{E}[D_p^1-D_p^2|A_3]=\mathbb{E}[T|T>0],\; \mathbb{E}[D_p^1-D_p^2|A_2]=0, \\
\mathbb{E}[D_p^1-D_p^2]&=0,\; Pr(A_1)=Pr(A_3)=Pr(T>0).\\
\end{aligned}
\end{equation*}
\normalsize
We can now represent $var(D_p^1-D_p^2)$ using $T$ as follows:
\small
\begin{equation*}
\begin{aligned}
var(D_p^1-D_p^2)&=\mathbb{E}[var(D_p^1-D_p^2|A)]+var(\mathbb{E}[D_p^1-D_p^2|A]) \\
&=\sum_i var(D_p^1-D_p^2|A_i)Pr(A_i)+\sum_i (\mathbb{E}[D_p^1-D_p^2|A_i])^2 Pr(A_i)\\
&=2 \left[var(T|T>0)+(\mathbb{E}[T|T>0])^2\right] Pr(T>0)\\
&=2\mathbb{E}[T^2|T>0] Pr(T>0),
\end{aligned}
\end{equation*}
%\begin{eqnarray*}
%var(D_p^1-D_p^2)&=&\mathbb{E}[var(D_p^1-D_p^2|A)]+var(\mathbb{E}[D_p^1-D_p^2|A]) \\
%&=&\sum_i var(D_p^1-D_p^2|A_i)Pr(A_i)+\sum_i (\mathbb{E}[D_p^1-D_p^2|A_i])^2 Pr(A_i)\\
%&=&2 var(T|T>0) Pr(T>0)+2(\mathbb{E}[T|T>0])^2 Pr(T>0)=2\mathbb{E}[T^2|T>0] Pr(T>0)\\
%&=&.
%\end{eqnarray*}
\normalsize
from which we can derive the representation of $\sigma_{rel}^2$.
\end{proof}

\begin{proof}[Proof of Corollary \ref{relativevariance}.]
The difference between two independent Poisson random variables (or scaled variables of them) is known to follow a Skellam distribution, which is approximately represented by a normal distribution. Since $X_p^1$ and $X_p^2+X^0_p$ are independent scaled Poisson random variables, the difference $T$ approximately follows a normal distribution: $T \propto N(-2p\mu,\frac{2\mu^2}{\lambda})$, whose mean is $\mu_T=-2p\mu$ and standard deviation is $\sigma_T=\mu\sqrt{\frac{2}{\lambda}}$. The z-score at $T=0$ is ${\alpha=\frac{-\mu_T}{\sigma_T}=p\sqrt{2\lambda}=\frac{p}{c_v}\sqrt{2}}$. Let also ${Z=1-\Phi(\alpha)=\Phi(-\alpha)\: (\approx Pr(Z>0))}$. The distribution of $T$ given~${T>0}$ is approximated by a truncated normal distribution, whose mean and variance are
\small
\begin{equation*}
\begin{aligned}
\mathbb{E}[T|T>0]&\approx \mu_T+\sigma_T \frac{\phi(\alpha)}{Z}=\sigma_T\left(-\alpha+ \frac{\phi(\alpha)}{Z}\right)\\
var(T|T>0)&\approx \sigma_T^2 \left(1+\alpha \frac{\phi(\alpha)}{Z}  - \left(\frac{\phi(\alpha)}{Z} \right)^2 \right).\\
\end{aligned}
\end{equation*}
\normalsize
Using these relationships with Proposition~\ref{varepsilon}, we obtain
\small
\begin{equation*}
\begin{aligned}
\sigma_{rel}^2&=\frac{2 \left[var(T|T>0)+(\mathbb{E}[T|T>0])^2\right] Pr(T>0)}{\sigma_T^2}\\
&\approx 2(1+\alpha^2)\Phi(-\alpha)-2\alpha\phi(\alpha).
\end{aligned}
\end{equation*}
\normalsize
\end{proof}

\begin{proof}[Proof of Lemma \ref{involution}.]
We use $\sigma_{n,0}^2$ to represent $\sigma_{n,p}^2$ and $\sigma_{n,1}^2$, from which we obtain~$\sigma_{rel}^2(p)$.
\small
\begin{equation*}
\begin{aligned}
{\sigma_{n,0}^2}&=\frac{1}{n} \sum_i var(D_0^i)=\frac{1}{n} var\left(\sum_i D_0^i\right)=\frac{1}{n} var\left(\sum_i D_p^i\right)=\frac{1}{n} \sum_{i=1}^n \sum_{j=1}^n cov(D_p^i,D_p^j)\\
&=\frac{1}{n}\sum_i^n  var(D_p^i) + \frac{2}{n}\mathop{\sum \sum}_{1 \le i<j \le n}  cov(D_p^i,D_p^j) =\left(1+(n-1)\rho_p\right)\sigma_{n,p}^2 \,,\\
\sigma_{n,1}^2&=\frac{1}{n} \sigma_{n,0}^2\,,\\
\sigma_{rel}^2(p)&=\frac{\sigma_{n,p}^2 - \sigma_{n,1}^2}{\sigma_{n,0}^2 - \sigma_{n,1}^2}=\frac{\frac{\sigma_{n,0}^2}{1+(n-1)\rho_p} - \frac{\sigma_{n,0}^2}{n}}{\sigma_{n,0}^2 - \frac{\sigma_{n,0}^2}{n}}=\frac{1-\rho_p}{1+(n-1)\rho_p}\,.
\end{aligned}
\end{equation*}
\normalsize
\end{proof}

\begin{proof}[Proof of Proposition \ref{wastage}.]
We first prove Equation \eqref{expshortage}. By definition, $\mathbb{E}[S]=\mathbb{E}[D_1-q]^+$. Also note that $D_1=\frac{\mu}{n\lambda}Y_{n\lambda}$ and $P_{Y_{\gamma}}(s)=\frac{\gamma^s e^{-\gamma}}{s!}$. It is straightforward to obtain
\small
\begin{align*}
\mathbb{E}[S] &= \mathbb{E}\left[D_1-q\right]^+=\mathbb{E}\left[\frac{\mu}{n\lambda}Y_{n\lambda}-q\right]^+=\frac{\mu}{n\lambda}\mathbb{E}\left[Y_{n\lambda}-\frac{n\lambda q}{\mu}\right]^+\\
&=\frac{\mu}{n\lambda}\sum_{y=s+1}^\infty \left(y-\frac{n\lambda q}{\mu}\right) P_{Y_{n\lambda}}(y)=\frac{\mu}{n\lambda}\left[\sum_{y=s+1}^\infty y P_{Y_{n\lambda}}(y)-\frac{n\lambda q}{\mu}\sum_{y=s+1}^\infty  P_{Y_{n\lambda}}(y)\right]\\
&=\frac{\mu}{n\lambda}\left[n\lambda \sum_{y=s}^\infty P_{Y_{n\lambda}}(y)-\frac{n\lambda q}{\mu}\sum_{y=s+1}^\infty  P_{Y_{n\lambda}}(y)\right]\\
&=\mu \left(1-F_{Y_{n\lambda}}(s)+P_{Y_{n\lambda}}(s)\right)-q\left(1-F_{Y_{n\lambda}}(s)\right)\\
&=(\mu-q)\left(1-F_{Y_{n\lambda}}(s)\right)+\mu P_{Y_{n\lambda}}(s).
\end{align*}
\normalsize
Next, we prove Equation \eqref{expwastage}. The proof of $\mathbb{E}[W] \geq \mathbb{E}\left[\frac{q}{m}-\overline{D}_1\right]^+$ is presented in \cite{chazan1977markovian}; this lower bound is obtained by evaluating the per-period (i.e., $1/m$ of the) expected wastage of the first $m$ periods assuming that all inventory in the beginning of period 1 is fresh. In contrast, the upper bound is the expected wastage of the $m\textsuperscript{th}$ period assuming again that all inventory in the beginning of period 1 is fresh, since the amount of the oldest units is largest in the beginning of the $m\textsuperscript{th}$ period. We omit the remaining part of the derivation of Equation~ \eqref{expwastage} since it is almost identical to the derivation of Equation~\eqref{expshortage} except we use $Y_{nm\lambda}$ instead of $Y_{n\lambda}$.
\end{proof}
\end{appendices}
\end{document}